\title{Further results on outer independent $2$-rainbow dominating functions of graphs\thanks{This research has been supported by the Discrete Mathematics Laboratory of the Faculty of Mathematical Sciences at Alzahra University.}}
\date{}
\author{{Babak Samadi\thanks{Corresponding author} and Nasrin Soltankhah}\vspace{1mm}\\
{Department of Mathematics, Faculty of Mathematical Sciences, Alzahra University,}\\ {Tehran, Iran}\vspace{1mm}\\
{b.samadi@alzahra.ac.ir}\\
{soltan@alzahra.ac.ir}}
\date{}
\newtheorem{theorem}{Theorem}[section]
\theoremstyle{definition}
\theoremstyle{remark}
\begin{document}

\maketitle

\begin{abstract}
Let $G=(V(G),E(G))$ be a graph. A function $f:V(G)\rightarrow \mathbb{P}(\{1,2\})$ is a $2$-rainbow dominating function if for every vertex $v$ with $f(v)=\emptyset$, $f\big{(}N(v)\big{)}=\{1,2\}$. An outer-independent $2$-rainbow dominating function (OI$2$RD function) of $G$ is a $2$-rainbow dominating function $f$ for which the set of all $v\in V(G)$ with $f(v)=\emptyset$ is independent. The outer independent $2$-rainbow domination number (OI$2$RD number) $\gamma_{oir2}(G)$ is the minimum weight of an OI$2$RD function of $G$.

In this paper, we first prove that $n/2$ is a lower bound on the OI$2$RD number of a connected claw-free graph of order $n$ and characterize all such graphs for which the equality holds, solving an open problem given in an earlier paper. In addition, a study of this parameter for some graph products is carried out. In particular, we give a closed (resp. an exact) formula for the OI$2$RD number of rooted (resp. corona) product graphs and prove upper bounds on this parameter for the Cartesian product and direct product of two graphs. 
\end{abstract}
\textbf{2010 Mathematical Subject Classification:} 05C69, 05C76.\\
\textbf{Keywords}: Outer-independent rainbow domination, claw-free graphs, Cartesian product, direct product, rooted product, corona product.


\section{Introduction and preliminaries} 

Throughout this paper, we consider $G$ as a finite simple graph with vertex set $V(G)$ and edge set $E(G)$. We use \cite{West} as a reference for terminology and notation which are not explicitly defined here. The {\em open neighborhood} of a vertex $v$ is denoted by $N(v)$, and its {\em closed neighborhood} is $N[v]=N(v)\cup \{v\}$. The {\em minimum} and {\em maximum degrees} of $G$ are denoted by $\delta(G)$ and $\Delta(G)$, respectively.

Given a graph $G$, a subset $S\subseteq V(G)$ is said to be a {\em dominating set} in $G$ if every vertex not in $S$ is adjacent to a vertex in $S$. The {\em domination number} $\gamma(G)$ is the minimum cardinality of a dominating set in $G$. 

Domination presents a model for a situation in which every empty location (vertex with no guards) needs to be protected by a guard occupying a neighboring location. A generalization of domination was proposed in \cite{Bresar}, where different types of guards are deployed, and the empty locations must have all types of guards in their neighborhoods. This relaxation led to the definition of $k$-rainbow domination. Indeed, a function $f:V(G)\rightarrow \mathbb{P}(\{1,\cdots,k\})$ is a \textit{$k$-rainbow dominating function} ($k$RD function) if for every vertex $v$ with $f(v)=\emptyset$, $f(N(v))=\{1,\cdots,k\}$. The \textit{$k$-rainbow domination number} $\gamma_{rk}(G)$ is the minimum weight of $\sum_{v\in V(G)}|f(v)|$ taken over all $k$RD functions of $G$. This concept was formally defined by Bre\v{s}ar et al. \cite{Bresar}. 

The existence of two adjacent locations with no guards can jeopardize them. Indeed, they would be considered more vulnerable. One improved situation for a location with no guards is to be surrounded by locations in which guards are stationed. This motivates us to consider a $kRD$ function $f$ for which the set of vertices assigned $\emptyset$ under $f$ is independent. More formally, we have the following definition. A function $f$ is an \textit{outer independent $k$-rainbow dominating function} (OI$k$RD function) of $G$ if $f$ is a $k$RD function and the set of vertices with weight $\emptyset$ is an independent set. The \textit{outer independent $k$-rainbow domination number} (OI$k$RD number) $\gamma_{oirk}(G)$ is the minimum weight of an OI$k$RD function of $G$. An OI$k$RD function of weight $\gamma_{oirk}(G)$ is called a $\gamma_{oirk}(G)$-function. This concept was first introduced by Kang et al. \cite{qssss} and studied in \cite{acm,mm}. Mansouri and Mojdeh \cite{mm} showed that the problem of computing the OI$2$RD number is NP-hard even when restricted to planar graphs with maximum degree at most four and triangle-free graphs. 

In this paper, emphasizing the case $k=2$, we first provide a characterization of all connected claw-free graphs whose OI$2$RD numbers are equal to half of their orders, solving an open problem from \cite{mm}. In the second section of the paper, we investigate the OI$2$RD numbers of some graph products such as the Cartesian, direct, rooted and corona products of graphs. We refer the readers to the book \cite{ImKl} for a comprehensive survey of the graph products.

For any function $f:V(G)\rightarrow \mathbb{P}(\{1,2\})$, we let $V_{\emptyset}$, $V_{\{1\}}$, $V_{\{2\}}$ and $V_{\{1,2\}}$ stand for the set of vertices assigned with $\emptyset$, $\{1\}$, $\{2\}$ and $\{1,2\}$ under $f$, respectively. Since these four sets determine $f$, we can equivalently write $f=(V_{\emptyset},V_{\{1\}},V_{\{2\}},V_{\{1,2\}})$. Note that $w(f)=|V_{\{1\}}|+|V_{\{2\}}|+2|V_{\{1,2\}}|$ is the weight of $f$.    


\section{Claw-free graphs}

Mansouri and Mojdeh \cite{mm} proved that the OI$2$RD number of a $K_{1,r}$-free graph $G$ of order $n$ with $s'$ strong support vertices can be bounded from below by $2(n+s')/(1+r)$. They also posed the open problem of characterizing all $K_{1,r}$-free (or at least claw-free) graphs for which the lower bound holds with equality. Our aim in this section is to solve the problem for the claw-free graphs (that is, the case $r=3$). Let $G$ be a claw-free graph with components $G_{1},\cdots,G_{t}$. Since $\gamma_{oir2}(G)=\sum_{i=1}^{t}\gamma_{oir2}(G_{i})$, it follows that $\gamma_{oir2}(G)=(n+s')/2$ if and only if $\gamma_{oir2}(G_{i})=(|V(G_{i})|+s_{i}')/2$ for each $1\leq i\leq t$, in which $s_{i}'$ is the number of strong support vertices of $G_{i}$. In such a case, $s'=\sum_{i=1}^{t}s_{i}'=0$, unless $G_{i}=P_{3}$ for some $1\leq i\leq t$. For such a component, we have $\gamma_{oir2}(G_{i})=(|V(G_{i})|+s_{i}')/2$. So, in what follows, we may assume that $G$ is connected and $s'=0$. In order to solve the problem in such a case, it suffices to characterize all connected claw-free graphs $G$ of order $n$ for which the equality holds in the lower bound $n/2$ on $\gamma_{oir2}(G)$.

In this section, we show that the OI$2$RD number of a claw-free graph can be bounded from below by half of its order. In order to characterize all claw-free graphs attaining this bound, we call a graph of the following form a \textit{$k$-unit} in which the number of triangles is $k-1$.
\begin{figure}[h]\vspace{-25mm}
\tikzstyle{every node}=[circle, draw, fill=white!, inner sep=0pt,minimum width=.16cm]
\begin{center}
\begin{tikzpicture}[thick,scale=.6]
  \draw(0,0) { 

+(-5,-5) node{}
+(-4,-5) node{}
+(-3,-5) node{}
+(-1.75,-5) node{}
+(-0.75,-5) node{}

+(-4.5,-6) node{}
+(-3.5,-6) node{}
+(-1.25,-6) node{}

+(-5,-5) -- +(-4,-5) -- +(-3,-5)
+(-1.75,-5) -- +(-0.75,-5)
+(-5,-5) -- +(-4.5,-6) -- +(-4,-5) -- +(-3.5,-6) -- +(-3,-5)
+(-1.75,-5) -- +(-1.25,-6) -- +(-0.75,-5)

+(-2.75,-5.2) node[rectangle, draw=white!0, fill=white!100]{${\textbf{.}}$}
+(-2.5,-5.2) node[rectangle, draw=white!0, fill=white!100]{${\textbf{.}}$}
+(-2.25,-5.2) node[rectangle, draw=white!0, fill=white!100]{${\textbf{.}}$}
+(-2,-5.2) node[rectangle, draw=white!0, fill=white!100]{${\textbf{.}}$}

+(-5.2,-4.6) node[rectangle, draw=white!0, fill=white!100]{${\tiny v_{1}}$}
+(-4,-4.6) node[rectangle, draw=white!0, fill=white!100]{${\tiny v_{2}}$}
+(-3,-4.6) node[rectangle, draw=white!0, fill=white!100]{${\tiny v_{3}}$}
+(-1.75,-4.6) node[rectangle, draw=white!0, fill=white!100]{${\tiny v_{k-1}}$}
+(-0.45,-4.6) node[rectangle, draw=white!0, fill=white!100]{${\tiny v_{k}}$}

};
\end{tikzpicture}
\end{center}
\caption{A $k$-unit.}\label{fig1}
\end{figure}
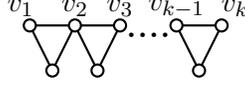\\
Note that a $1$-unit is isomorphic to $K_{1}$. We now let $\mathcal{G}$ be the family of all graphs of the form $G_{1}$, $G_{2}$ and $G_{3}$ depicted in Figure \ref{fig2}.

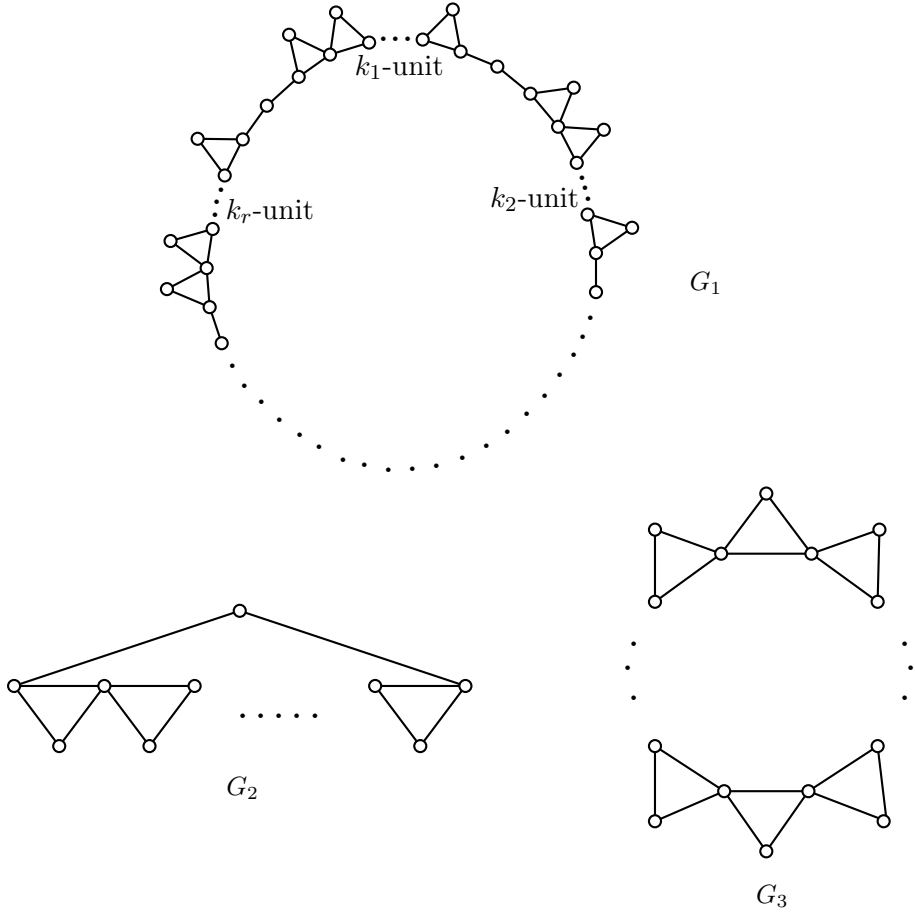
\begin{figure}[h]
\tikzstyle{every node}=[circle, draw, fill=white!, inner sep=0pt,minimum width=.16cm]
\begin{center}
\begin{tikzpicture}[thick,scale=0.8]
  \draw(0,0) { 
  
+(-11.8,9.15) node{}
+(-11.27,9.63) -- +(-11.8,9.15)

+(-12.95,8.6) node{}
+(-12.2,8.59) -- +(-12.95,8.6) -- +(-12.5,7.99)

+(-12.2,8.59) node{}   
+(-11.8,9.15) -- +(-12.2,8.59) 

+(-12.5,7.99) node{} 
+(-12.2,8.59) -- +(-12.5,7.99)

+(-12.56,7.75) node[rectangle, draw=white!0, fill=white!100]{ ${\textbf{.}}$} 
+(-12.63,7.56) node[rectangle, draw=white!0, fill=white!100]{ ${\textbf{.}}$}
+(-12.66,7.34) node[rectangle, draw=white!0, fill=white!100]{ ${\textbf{.}}$}
+(-11.75,7.4) node[rectangle, draw=white!0, fill=white!100]{$k_{r}$-unit} 

+(-12.7,7.1) node{}
+(-12.8,6.45) node{}

+(-12.7,7.1) -- +(-12.8,6.45)

+(-13.4,6.9) node{}
+(-12.7,7.1) -- +(-13.4,6.9) -- +(-12.8,6.45)

+(-12.75,5.8) node{}
+(-12.8,6.45) -- +(-12.75,5.8)

+(-13.46,6.1) node{}
+(-12.8,6.45) -- +(-13.46,6.1) -- +(-12.75,5.8)

+(-12.55,5.2) node{}
+(-12.55,5.2) -- +(-12.75,5.8)

+(-10.75,10) node{}
+(-10.1,10.2) node{}
+(-11.27,9.63) node{}

+(-9.86,10.26) node[rectangle, draw=white!0, fill=white!100]{ ${\textbf{.}}$}
+(-9.66,10.265) node[rectangle, draw=white!0, fill=white!100]{ ${\textbf{.}}$}
+(-9.46,10.26) node[rectangle, draw=white!0, fill=white!100]{ ${\textbf{.}}$}
+(-9.6,9.8) node[rectangle, draw=white!0, fill=white!100]{$k_{1}$-unit}

+(-11.27,9.63) -- +(-11.43,10.33) -- +(-10.75,10) -- +(-10.65,10.7) -- +(-10.1,10.2)
+(-9.21,10.25) -- +(-8.71,10.75) -- +(-8.58,10.05)

+(-9.21,10.25) node{}
+(-8.58,10.05) node{}

+(-11.43,10.33) node{}
+(-10.65,10.7) node{}
+(-8.71,10.75) node{}

+(-11.27,9.63) -- +(-10.75,10) -- +(-10.1,10.2)
+(-9.21,10.25) -- +(-8.58,10.05)

+(-7.97,9.8) node{}

+(-7.42,9.35) node{}
+(-6.96,8.8) node{}
+(-6.65,8.2) node{}

+(-6.56,7.96) node[rectangle, draw=white!0, fill=white!100]{ ${\textbf{.}}$}
+(-6.51,7.79) node[rectangle, draw=white!0, fill=white!100]{ ${\textbf{.}}$}
+(-6.46,7.6) node[rectangle, draw=white!0, fill=white!100]{ ${\textbf{.}}$}
+(-7.35,7.6) node[rectangle, draw=white!0, fill=white!100]{$k_{2}$-unit}

+(-6.47,7.34) node{}
+(-6.33,6.7) node{}

+(-8.58,10.05) -- +(-7.97,9.8) -- +(-7.42,9.35) -- +(-6.96,8.8) -- +(-6.65,8.2)
+(-6.43,7.34) -- +(-6.33,6.7)

+(-6.7,9.45) node{}
+(-7.42,9.35) -- +(-6.7,9.45) -- +(-6.96,8.8)

+(-6.2,8.75) node{}
+(-6.96,8.8) -- +(-6.2,8.75) -- +(-6.65,8.2)

+(-5.73,7.12) node{}
+(-6.43,7.34) -- +(-5.73,7.12) -- +(-6.33,6.7)

+(-6.33,6.05) node{}
+(-6.33,6.7) -- +(-6.33,6.05)

+(-6.42,5.62) node[rectangle, draw=white!0, fill=white!100]{ ${\textbf{.}}$}
+(-6.55,5.3) node[rectangle, draw=white!0, fill=white!100]{ ${\textbf{.}}$}
+(-6.7,5.01) node[rectangle, draw=white!0, fill=white!100]{ ${\textbf{.}}$}
+(-6.93,4.68) node[rectangle, draw=white!0, fill=white!100]{ ${\textbf{.}}$}
+(-7.18,4.32) node[rectangle, draw=white!0, fill=white!100]{ ${\textbf{.}}$}
+(-7.48,4.03) node[rectangle, draw=white!0, fill=white!100]{ ${\textbf{.}}$}
+(-7.8,3.72) node[rectangle, draw=white!0, fill=white!100]{ ${\textbf{.}}$}
+(-8.15,3.49) node[rectangle, draw=white!0, fill=white!100]{ ${\textbf{.}}$}
+(-8.55,3.3) node[rectangle, draw=white!0, fill=white!100]{ ${\textbf{.}}$}
+(-8.98,3.17) node[rectangle, draw=white!0, fill=white!100]{ ${\textbf{.}}$}
+(-9.38,3.11) node[rectangle, draw=white!0, fill=white!100]{ ${\textbf{.}}$}
+(-9.78,3.1) node[rectangle, draw=white!0, fill=white!100]{ ${\textbf{.}}$}
+(-10.18,3.18) node[rectangle, draw=white!0, fill=white!100]{ ${\textbf{.}}$}
+(-10.53,3.26) node[rectangle, draw=white!0, fill=white!100]{ ${\textbf{.}}$}
+(-10.93,3.48) node[rectangle, draw=white!0, fill=white!100]{ ${\textbf{.}}$}
+(-11.25,3.67) node[rectangle, draw=white!0, fill=white!100]{ ${\textbf{.}}$}
+(-11.59,3.93) node[rectangle, draw=white!0, fill=white!100]{ ${\textbf{.}}$}
+(-11.89,4.22) node[rectangle, draw=white!0, fill=white!100]{ ${\textbf{.}}$}
+(-12.17,4.5) node[rectangle, draw=white!0, fill=white!100]{ ${\textbf{.}}$}
+(-12.37,4.83) node[rectangle, draw=white!0, fill=white!100]{ ${\textbf{.}}$}
  

+(-16,-0.5) node{}
+(-14.5,-0.5) node{}
+(-13,-0.5) node{}
+(-10,-0.5) node{}
+(-8.5,-0.5) node{}
+(-16,-0.5) -- +(-14.5,-0.5) -- +(-13,-0.5)
+(-10,-0.5) -- +(-8.5,-0.5)

+(-15.25,-1.5) node{}
+(-13.75,-1.5) node{}
+(-9.25,-1.5) node{}
+(-16,-0.5) -- +(-15.25,-1.5) -- +(-14.5,-0.5) -- +(-13.75,-1.5) -- +(-13,-0.5)
+(-10,-0.5) -- +(-9.25,-1.5) -- +(-8.5,-0.5)

+(-12.25,0.75) node{}
+(-16,-0.5) -- +(-12.25,0.75) -- +(-8.5,-0.5)

+(-12.2,-1) node[rectangle, draw=white!0, fill=white!100]{ ${\textbf{.}}$}
+(-11.9,-1) node[rectangle, draw=white!0, fill=white!100]{ ${\textbf{.}}$}
+(-11.6,-1) node[rectangle, draw=white!0, fill=white!100]{ ${\textbf{.}}$}
+(-11.3,-1) node[rectangle, draw=white!0, fill=white!100]{ ${\textbf{.}}$}
+(-11,-1) node[rectangle, draw=white!0, fill=white!100]{ ${\textbf{.}}$}

                             
+(-4.25,1.7) node{}
+(-2.75,1.7) node{}
+(-1.65,0.9) node{}
+(-5.35,0.9) node{}

+(-1.2,0.2) node[rectangle, draw=white!0, fill=white!100]{ ${\textbf{.}}$}
+(-1.1,-0.2) node[rectangle, draw=white!0, fill=white!100]{ ${\textbf{.}}$}
+(-1.2,-0.7) node[rectangle, draw=white!0, fill=white!100]{ ${\textbf{.}}$}

+(-5.7,0.2) node[rectangle, draw=white!0, fill=white!100]{ ${\textbf{.}}$}
+(-5.8,-0.2) node[rectangle, draw=white!0, fill=white!100]{ ${\textbf{.}}$}
+(-5.7,-0.7) node[rectangle, draw=white!0, fill=white!100]{ ${\textbf{.}}$}

+(-4.2,-2.25) node{}
+(-2.8,-2.25) node{}
+(-1.65,-1.5) node{}
+(-5.35,-1.5) node{}

+(-5.35,0.9) -- +(-4.25,1.7) -- +(-2.75,1.7) -- +(-1.65,0.9) 
+(-5.35,-1.5) -- +(-4.2,-2.25) -- +(-2.8,-2.25) -- +(-1.65,-1.5)

+(-3.5,2.7) node{}
+(-1.62,2.1) node{}
+(-5.35,2.1) node{}

+(-3.5,-3.25) node{}
+(-1.55,-2.75) node{}
+(-5.35,-2.75) node{}

+(-5.35,0.9) -- +(-5.35,2.1) -- +(-4.25,1.7) -- +(-3.5,2.7) -- +(-2.75,1.7) -- +(-1.62,2.1) -- +(-1.65,0.9) 
+(-5.35,-1.5) -- +(-5.35,-2.75) -- +(-4.2,-2.25) -- +(-3.5,-3.25) -- +(-2.8,-2.25) -- +(-1.5,-2.75) -- +(-1.65,-1.5)


+((-4.5,6.2) node[rectangle, draw=white!0, fill=white!100]{{\small $G_1$}}
+((-12.2,-2.2) node[rectangle, draw=white!0, fill=white!100]{{\small $G_2$}}
+((-3.4,-4) node[rectangle, draw=white!0, fill=white!100]{{\small $G_3$}}
};
\end{tikzpicture}
\end{center}
\caption{The claw-free graphs $G_{1}$, $G_{2}$ and $G_{3}$. In $G_{2}$ (resp. $G_{3}$), the number of triangles is odd (resp. even). In $G_{1}$, $k_{1}+\cdots+k_{r}$ is even.}\label{fig2}
\end{figure} 

\begin{theorem}\label{claw-free}
Let $G$ be a connected claw-free graph of order $n$. Then, $\gamma_{oir2}(G)\geq n/2$ with equality if and only if $G\in \mathcal{G}$.
\end{theorem}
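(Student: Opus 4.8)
The plan is to separate the statement into the inequality $\gamma_{oir2}(G)\ge n/2$, the extraction of a rigid structure from any function attaining it, and then a two‑way argument matching that structure against $\mathcal{G}$.

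\emph{The lower bound.} Take a $\gamma_{oir2}(G)$-function $f=(V_{\emptyset},V_{\{1\}},V_{\{2\}},V_{\{1,2\}})$ and write $n_{\emptyset},n_1,n_2,n_{12}$ for the sizes of the four classes. For every vertex $u$ the set $N(u)\cap V_{\emptyset}$ is independent and contained in $N(u)$, so claw‑freeness gives $|N(u)\cap V_{\emptyset}|\le 2$. Split $V_{\emptyset}$ into $A$, the vertices having a neighbour in $V_{\{1,2\}}$, and $B=V_{\emptyset}\setminus A$. Double counting the edges between $A$ and $V_{\{1,2\}}$ (at least one per vertex of $A$, at most two per vertex of $V_{\{1,2\}}$) yields $|A|\le 2n_{12}$; double counting the edges between $B$ and $V_{\{1\}}\cup V_{\{2\}}$ (each $v\in B$ has all its $\ge 2$ neighbours there, each vertex of $V_{\{1\}}\cup V_{\{2\}}$ has at most two neighbours in $V_{\emptyset}$) yields $|B|\le n_1+n_2$. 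Hence $n_{\emptyset}\le n_1+n_2+2n_{12}$, so $n=n_{\emptyset}+n_1+n_2+n_{12}\le 2n_1+2n_2+3n_{12}\le 2w(f)$, i.e.\ $w(f)\ge n/2$.

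\emph{The rigid structure, and the ``if'' direction.} If $w(f)=n/2$ then all the above inequalities are tight; in particular $n_{12}=0$, $W:=V_{\{1\}}\cup V_{\{2\}}$ has $|W|=n_{\emptyset}=n/2$, every vertex of $V_{\emptyset}$ has degree exactly $2$ with one neighbour in $V_{\{1\}}$ and one in $V_{\{2\}}$, and every vertex of $W$ has exactly two neighbours in $V_{\emptyset}$. Conversely, any graph admitting such a partition together with a $2$-colouring of $W$ making each vertex of $V_{\emptyset}$ see both colours satisfies $\gamma_{oir2}(G)\le |V(G)\setminus V_{\emptyset}|=n/2$, hence equality. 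For the ``if'' direction one checks directly that each member of $\mathcal{G}$ carries such data: put into $V_{\emptyset}$ the apex of every triangle of each $k_i$-unit, the link vertices joining consecutive units, the extra vertex in $G_2$, or the apices of $G_3$, as appropriate; this set is independent and $2$-regular, the remaining vertices form a cycle (the ``apex cycle'') whose length is $k_1+\cdots+k_r$ (resp.\ the number of triangles, up to the obvious shift), and the stated parity hypothesis is exactly what allows this cycle to be properly $2$-coloured.

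\emph{The ``only if'' direction: structure of $G[W]$ and the cyclic case.} Assume the rigid partition. Applying claw‑freeness at $w\in W$ with $V_{\emptyset}$-neighbours $v_1,v_2$, whose second neighbours are $a_1,a_2$, forces $N(w)\cap W\subseteq\{a_1,a_2\}$; hence $\deg_{G[W]}(w)\le 2$ and $G[W]$ is a disjoint union of paths and cycles. If $G[W]$ has a cycle $C=c_1\cdots c_m$, the same local argument forces each $c_i$'s two $V_{\emptyset}$-neighbours to lie on its two cycle‑edges, producing apices $u_i\sim c_i,c_{i+1}$; connectivity then makes $G$ the whole ``cycle of triangles'', and the constraint $f(\{c_i,c_{i+1}\})=\{1,2\}$ forces $m$ even, i.e.\ $G=G_3$.

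\emph{The ``only if'' direction: the acyclic case, and the main obstacle.} If $G[W]$ is a union of paths, the local argument shows that every path edge carries a triangle‑apex, that an internal vertex uses both of its $V_{\emptyset}$-slots on the apices of its two incident edges, and that the single remaining slot of each path endpoint is used by a ``link'' apex joining two endpoints of (possibly different) paths (a counting check confirms the totals match $|V_{\emptyset}|=|W|$). Contracting each path together with its internal apices to a $k_i$-unit, the link apices glue these units, via connectivity, into a single cyclic chain, which is exactly $G_1$ — or $G_2$ when there is only one unit — and $2$-colourability of $f$ forces the parity of $k_1+\cdots+k_r$. The bulk of the work, and the step I expect to be the main obstacle, is this last paragraph: one must carefully dispose of the small degenerate configurations (e.g.\ $K_4-e$, short cycles, the $\ell=1$ and $\ell=2$ path components), and, crucially, rule out a link apex folding back onto the two endpoints of one and the same path, or onto the same endpoint, by showing such configurations violate the $2$-colourability of $f$ (they would force a triangle of the apex graph, i.e.\ an odd cycle).
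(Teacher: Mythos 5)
Your proposal follows essentially the same route as the paper: the same claw-freeness double count for the bound $n/2$ (your $A$ is the paper's $Q=V_{\emptyset}\cap N(V_{\{1,2\}})$), the same rigidity consequences of equality ($V_{\{1,2\}}=\emptyset$, every $V_{\emptyset}$-vertex of degree two seeing one vertex of each colour, every coloured vertex with exactly two $V_{\emptyset}$-neighbours, $\Delta(G[W])\le 2$), the same split into the cycle case giving $G_3$ and the path/unit case giving $G_1$ or $G_2$, and the same explicit colourings for the converse. One remark on the step you flag as the main obstacle: in the paper it is settled by connectivity and the degree-two condition rather than by $2$-colourability --- a link apex joining the two endpoints of one and the same path is exactly the $G_2$ configuration (so it is not to be excluded), an apex attaching to an internal vertex is impossible because internal vertices already carry two $V_{\emptyset}$-neighbours, a fold-back in the presence of further units would disconnect $G$, and $2$-colourability is used only to force the parity conditions (evenness of $t$, of $k_1+\cdots+k_r$, and of the cycle length in $G_3$).
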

\begin{proof}
Let $f$ be a $\gamma_{oir2}(G)$-function. We set $Q=V_{\emptyset}\cap N(V_{\{1,2\}})$. Since $G$ is a claw-free graph and because $V_{\emptyset}$ is independent, every vertex in $V_{\{1,2\}}$ has at most two neighbors in $Q$. Thus, $|Q|\leq2|V_{\{1,2\}}|$. On the other hand, every vertex in $V_{\emptyset}\setminus Q$ has at least two neighbors in $V_{\{1\}}\cup V_{\{2\}}$. This implies that $2|V_{\emptyset}\setminus Q|\leq|[V_{\emptyset}\setminus Q,V_{\{1\}}\cup V_{\{2\}}]|\leq2(|V_{\{1\}}|+|V_{\{2\}}|)$. So, $|V_{\emptyset}\setminus Q|\leq|V_{\{1\}}|+|V_{\{2\}}|$. We now have 
\begin{equation}\label{claw-free1}
\begin{array}{lcl}
2\big{(}n-\gamma_{oir2}(G)\big{)}&\leq&2(n-|V_{\{1\}}|-|V_{\{2\}}|-|V_{\{1,2\}}|)=2|V_{\emptyset}|=2|Q|+2|V_{\emptyset}\setminus Q|\\
&\leq&2(|V_{\{1\}}|+|V_{\{2\}}|+2|V_{\{1,2\}}|)=2\gamma_{oir2}(G),
\end{array}
\end{equation}
implying the lower bound.

Suppose that the equality holds for a connected claw-free graph $G$. Then, all inequalities in (\ref{claw-free1}) necessarily hold with equality. In particular, $V_{\{1,2\}}=\emptyset$ (and consequently $Q=\emptyset$) by the equality instead of the first inequality in (\ref{claw-free1}). This implies that every vertex in $V_{\emptyset}$ has at least one neighbor in each of $V_{\{1\}}$ and $V_{\{2\}}$. Taking this fact into account, the resulting equality $2|V_{\emptyset}|=|[V_{\emptyset},V_{\{1\}}\cup V_{\{2\}}]|$ shows that every vertex in $V_{\emptyset}$ has precisely one neighbor in each of $V_{\{1\}}$ and $V_{\{2\}}$. On the other hand, $|[V_{\emptyset},V_{\{1\}}\cup V_{\{2\}}]|=2(|V_{\{1\}}|+|V_{\{2\}}|)$ follows that every vertex in $V_{\{1\}}\cup V_{\{2\}}$ is adjacent to exactly two vertices in $V_{\emptyset}$.

Let $H=G[V_{\{1\}}\cup V_{\{2\}}]$. Suppose to the contrary that $\deg_{H}(v)\geq3$ for some $v\in V(H)$. Since $G$ is claw-free and because $v$ has two neighbors in $V_{\emptyset}$, it follows that every vertex in $N_{H}(v)$ must be adjacent to at least one of the two neighbors of $v$, say $x_{1}$ and $x_{2}$, in $V_{\emptyset}$. This implies that $\deg(x_{1})\geq3$ or $\deg(x_{2})\geq3$, a contradiction. The above discussion guarantees that $\Delta(H)\leq2$, and thus $H$ is isomorphic to a disjoint union of some cycles and paths. 

Let $H'$ be a cycle $v_{1}v_{2}\cdots v_{t}v_{1}$ as a component of $H$. Let $v_{11}$ and $v_{12}$ be the neighbors of $v_{1}$ in $V_{\emptyset}$. Since $G$ is claw-free, both $v_{2}$ and $v_{t}$ have at least one neighbor in $\{v_{11},v_{12}\}$. Furthermore, because $\deg(v_{11})=\deg(v_{12})=2$, both $v_{2}$ and $v_{t}$ have exactly one neighbor in $\{v_{11},v_{12}\}$. We may assume, without loss of generality, that $v_{11}v_{t},v_{12}v_{2}\in E(G)$. Let $v_{22}$ be the second neighbor of $v_{2}$ in $V_{\emptyset}$. Again, because $G$ is claw-free and $v_{12}v_{22},v_{12}v_{3}\notin E(G)$, we infer that $v_{22}v_{3}\in E(G)$. Iterating this process results in a graph of the form $G_{3}$, in which $v_{1}v_{2}\cdots v_{t}v_{1}$ is the resulting cycle by removing all vertices of degree two. In addition, since all the vertices in $V_{\emptyset}$ have degree two, both $\{1\}$ and $\{2\}$ must appear on their neighbors on the cycle. This implies that $t$ is even. Note that each vertex of $H'$ has no other neighbors in $G$. Now $G$ is of the form $G_{3}$ by its connectedness.

In what follows, we may assume that $H$ does not have any cycle as a component. If $H$ is an edgeless graph, then $G$ is isomorphic to the cycle $C_{n}$. Notice that
 \begin{equation}\label{cycle}
\gamma_{oir2}(C_{p})=\lfloor p/2\rfloor+\lceil p/4\rceil-\lfloor p/4\rfloor
\end{equation}
for $p\geq3$ (see \cite{mm}). Since $\gamma_{oir2}(C_{n})=n/2$, the formula (\ref{cycle}) shows that $n\equiv0$ (mod $4$). It is then easy to observe that $G$ is of the form of $G_{1}$, in which $r=n/2$ and $k_{1}=\cdots=k_{r}=1$. Suppose now that $H$ is not edgeless and let $H''$ be a path $v_{1}v_{2}\cdots v_{t}$, on $t\geq2$ vertices, as a component of $H$. Let $v_{2}$ be adjacent to two vertices $v_{21}$ and $v_{22}$ in $V_{\emptyset}$. Note that $v_{1}$ must be adjacent to at least one of $v_{21}$ and $v_{22}$, for otherwise $G$ would have a claw as an induced subgraph. If $v_{1}$ is adjacent to both $v_{21}$ and $v_{22}$, then $G[v_{1},v_{2},v_{21}]$ is a $2$-unit and $G\cong G[v_{1},v_{2},v_{21},v_{22}]\cong K_{4}-v_{21}v_{22}$. In such a case, $G$ is of the form $G_{2}$ since it is connected. So, $G\in \mathcal{G}$. In what follows, we assume that $v_{1}$ is adjacent to only one of $v_{21}$ and $v_{22}$. We then proceed with $v_{2}$. Since $G$ is claw-free, it follows that both $v_{1}$ and $v_{3}$ must have neighbors in $\{v_{21},v_{22}\}$. Moreover, $\deg(v_{21})=\deg(v_{22})=2$ shows that both $v_{1}$ and $v_{3}$ have exactly one neighbor in $\{v_{21},v_{22}\}$. So, we may assume that $v_{1}v_{21},v_{3}v_{22}\in E(G)$. Similarly, $v_{3}$ has two neighbors $v_{31}$ and $v_{32}$ in $V_{\emptyset}$, in which we may assume that $v_{31}=v_{22}$. By repeating this process we obtain a $t$-unit on the set of vertices $K=\{v_{1},v_{2},\cdots,v_{t},u_{12},u_{23},\cdots,u_{(t-1)t}\}$ in which $u_{i(i+1)}$ is adjacent to both $v_{i}$ and $v_{i+1}$, for $1\leq i\leq t-1$. We now consider two cases depending on $A=\big{(}N(v_{1})\cap N(v_{t})\big{)}\setminus\{v_{2}\}$.

\textit{Case $1$.} $A\neq\emptyset$. Let $u_{1t}$ be in $A$. Because the path $H''$ is a component of $H$, $A\cap V(H)=\emptyset$. Therefore, $u_{1t}\in V_{\emptyset}$. Since both $v_{1}$ and $v_{t}$ have exactly two neighbors in $V_{\emptyset}$, it follows that $A=\{u_{1t}\}$. In such a situation, the subgraph induced by $K\cup\{u_{1t}\}$ is isomorphic to $G$ because it is connected. Notice that since all vertices in $V_{\emptyset}$ have degree two, both $\{1\}$ and $\{2\}$ must appear on their neighbors in the $\{v_{1},\cdots,v_{t}\}$. This shows that $t$ is even and hence $G$ is of the form $G_{2}$.

\textit{Case $2$.} $A=\emptyset$. This implies that the subgraph induced by $K$ is a $t$-unit. Then $v_{t}$ is adjacent to a vertex $x'\in V_{\emptyset}$ and $x'$ is adjacent to vertex $x\notin V(H)\setminus V(H'')$ (note that if $x\in V(H'')$, then $x=v_{j}\in\{v_{1},\cdots,v_{t-1}\}$. If $j=1$, then $G$ is of the form $G_{2}$ and hence $G\in \mathcal{G}$. If $j\geq2$, then $v_{j}$ has at least three neighbors in $V_{\emptyset}$ which is impossible). Let $x$ belong to a component $H'''$ of $H$. Since $H$ does not have a cycle as a component, it follows that $H'''$ is a path. In such a case, the vertices of $H'''$ belong to a $|V(H''')|$-unit by a similar fashion. Iterating this process we obtain some $|V(H_{1})|,\cdots,|V(H_{s})|$-units constructed as above, in which $H_{1}=H''$ and $s$ is the largest integer for which there exists such a $|V(H''_{s})|$-unit. Let $H_{s}=w_{1}\cdots w_{p}$ and $w_{p}$ be the vertex which has only one neighbor in $V_{\emptyset}$ in the subgraph induced by $V(H_{1})\cup\cdots\cup V(H_{r})$. Similar to Case $1$, there exists a vertex $u_{1p}\in V_{\emptyset}$ adjacent to both $v_{1}$ and $w_{p}$. On the other hand, both $\{1\}$ and $\{2\}$ must appear on the neighbors of each vertex in $V_{\emptyset}$. This implies that $\sum_{i=1}^{r}|V(H_{i})|$ must be even. Therefore, $G$ is of the form $G_{1}$. 

In both cases above, we have concluded that $G\in \mathcal{G}$.

Conversely, let $G\in \mathcal{G}$. Suppose first that $G$ is of the form $G_{2}$. Let $v_{1}\cdots v_{2t}$ be the path on the set of vertices of degree at lest three of $G_{2}$. Then $\big{(}f(v_{2i-1}),f(v_{2i})\big{)}=(\{1\},\{2\})$ for $1\leq i\leq t$, and $f(v)=\emptyset$ for the other vertices defines an OI$2$RD function with weight half of the order. Let $G$ be of the form $G_{1}$. Let $\{u_{1},\cdots,u_{2p}\}$ be the set of vertices of degree at least three of $G_{1}$ such that $u_{1}\cdots u_{k_{1}}$ is the path in the $k_{1}$-unit, $u_{k_{1}+1}\cdots u_{k_{1}+k_{2}}$ is the path in the $k_{2}$-unit, and so on. It is easy to see that $\big{(}g(u_{2i-1}),g(u_{2i})\big{)}=(\{1\},\{2\})$ for $1\leq i\leq p$, and $g(u)=\emptyset$ for the other vertices is an OI$2$RD function with weight $n/2$. Finally, we suppose that $G$ is of the form $G_{3}$. Let $x_{1}x_{2}\cdots x_{2q}x_{1}$ be the cycle on the vertices of degree four. Then the assignment $\big{(}g(x_{2i-1}),g(x_{2i})\big{)}=(\{1\},\{2\})$ for $1\leq i\leq q$, and $g(x)=\emptyset$ for the other vertices defines an OI$2$RD function of $G_{3}$ with weigh half of its order. Therefore, in all three possibilities, we have concluded that $\gamma_{oir2}(G)=n/2$. This completes the proof. 
\end{proof}


\section{OI$2$RD number of some graph products}

In this section, we consider the OI$2$RD number of direct, Cartesian, rooted and corona products of two graphs.

\subsection{Direct and Cartesian Products}

For the following two standard products of graphs $G$ and $H$ (see \cite{ImKl}), the vertex set of the product is $V(G)\times V(H)$. In the edge set of the \emph{direct product} $G\times H$, two vertices are adjacent if they are adjacent in both coordinates. On the other hand, in the edge set of the \emph{Cartesian product} $G\Box H$, two vertices are adjacent if they are adjacent in one coordinate and equal in the other.  

For a graph $G$, we let $I_{G}$ denote the set of isolated vertices of $G$. By $G^-$ we denote the graph obtained from $G$ by removing all the isolated vertices of $G$. We observe that $I=\big{(}I_{G}\times V(H)\big{)}\cup \big{(}V(G)\times I_{H}\big{)}$ is the set of isolated vertices of the direct product $G\times H$ with $|I|=|I_{G}||V(H)|+|I_{H}||V(G)|-|I_{G}||I_{H}|$. On the other hand, $\gamma_{oir2}(G\times H)=\gamma_{oir2}(G^{-}\times H^{-})+|I|$. So, we may suppose that both $G$ and $H$ have no isolated vertices.

\begin{theorem}\label{direct}
Let $G$ and $H$ be two graphs with no isolated vertices. Then,
$$\gamma_{oir2}(G\times H)\leq min\{\gamma_{oir2}(H)|V(G)|,\gamma_{oir2}(G)|V(H)|\}.$$
Moreover, this bound is sharp. 
\end{theorem}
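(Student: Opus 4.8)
The plan is to prove the upper bound by \emph{lifting} an optimal outer independent $2$-rainbow dominating function of one factor to the whole product, simply copying it across every layer coming from the other factor. To obtain $\gamma_{oir2}(G\times H)\le \gamma_{oir2}(H)|V(G)|$, I would take a $\gamma_{oir2}(H)$-function $f$ and define $F\colon V(G\times H)\to \mathbb{P}(\{1,2\})$ by $F(g,h)=f(h)$ for every $(g,h)\in V(G)\times V(H)$. The weight is then immediate: $w(F)=\sum_{g\in V(G)}\sum_{h\in V(H)}|f(h)|=|V(G)|\,w(f)=|V(G)|\,\gamma_{oir2}(H)$, so everything reduces to checking that $F$ is an OI$2$RD function of $G\times H$.

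There are two things to verify. First, $F$ is a $2$-rainbow dominating function: if $F(g,h)=\emptyset$ then $f(h)=\emptyset$, so since $f$ is a $2$RD function of $H$ there are $h_{1},h_{2}\in N_{H}(h)$ (possibly equal) with $1\in f(h_{1})$ and $2\in f(h_{2})$; because $G$ has no isolated vertex, $g$ has a neighbour $g'$ in $G$, and then $(g',h_{1}),(g',h_{2})\in N_{G\times H}(g,h)$, so $F\big{(}N_{G\times H}(g,h)\big{)}=\{1,2\}$. Second, the empty set of $F$ is $V(G)\times V_{\emptyset}$, where $V_{\emptyset}$ is the (independent) empty set of $f$; two vertices in $V(G)\times V_{\emptyset}$ can be adjacent in $G\times H$ only if their second coordinates are adjacent in $H$, which is impossible, so the empty set of $F$ is independent. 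Hence $F$ is an OI$2$RD function and $\gamma_{oir2}(G\times H)\le |V(G)|\,\gamma_{oir2}(H)$; exchanging the roles of $G$ and $H$ gives $\gamma_{oir2}(G\times H)\le |V(H)|\,\gamma_{oir2}(G)$, and taking the minimum finishes the inequality.

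For sharpness I would invoke the bipartite double cover: if $G$ is any connected bipartite graph, then $G\times K_{2}$ is the disjoint union of two copies of $G$, hence $\gamma_{oir2}(G\times K_{2})=2\gamma_{oir2}(G)$. Since assigning $\{1\}$ to every vertex shows $\gamma_{oir2}(G)\le |V(G)|$, the right-hand side of the theorem for $H=K_{2}$ equals $\min\{2|V(G)|,\,2\gamma_{oir2}(G)\}=2\gamma_{oir2}(G)$, so equality holds; the smallest instance is $G=H=K_{2}$, where $K_{2}\times K_{2}\cong 2K_{2}$ has OI$2$RD number $4=\gamma_{oir2}(K_{2})\cdot|V(K_{2})|$. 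I do not expect a genuine obstacle here: the only delicate point is that adjacency in the direct product requires adjacency in \emph{both} coordinates, so the rainbow requirement at $(g,h)$ must be satisfied through a single common neighbour $g'$ of $g$ in $G$ — which is precisely why the hypothesis of no isolated vertices is needed, and why the reduction $\gamma_{oir2}(G\times H)=\gamma_{oir2}(G^{-}\times H^{-})+|I|$ preceding the statement is set up beforehand.
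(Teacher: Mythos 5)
Your proof of the upper bound is correct and essentially identical to the paper's: both lift a $\gamma_{oir2}(H)$-function $f$ to the product via $F(g,h)=f(h)$, check that the empty set of $F$ is independent (adjacency in $G\times H$ would force adjacency in the second coordinate), and use a neighbour $g'$ of $g$ (this is where the no-isolated-vertex hypothesis enters) to pull the rainbow condition from $H$ into $G\times H$. The only genuine difference is the sharpness witness. The paper takes $K_{m}\times K_{n}$ with $m,n\geq2$ and argues from $\alpha(K_{m}\times K_{n})=\max\{m,n\}$ that $\gamma_{oir2}(K_{m}\times K_{n})\geq mn-\max\{m,n\}=\min\{m(n-1),n(m-1)\}$, matching the bound; you instead take $H=K_{2}$ and a connected bipartite $G$, using $G\times K_{2}\cong 2G$ together with $\gamma_{oir2}(G)\leq|V(G)|$ to get $\gamma_{oir2}(G\times K_{2})=2\gamma_{oir2}(G)=\min\{2|V(G)|,2\gamma_{oir2}(G)\}$. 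Your route is a bit softer (no independence-number computation in the product) and yields an infinite family of tight pairs, one for every connected bipartite $G$, with $K_{2}\times K_{2}\cong 2K_{2}$ as the smallest case; the paper's route shows tightness also for pairs of complete (hence non-bipartite) factors, so neither example subsumes the other, and both correctly establish sharpness.
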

\begin{proof}
Let $g$ be a $\gamma_{oir2}(H)$-function. We define $f:V(G)\times V(H)\rightarrow \mathbb{P}(\{1,2\})$ by $f(x,y)=g(y)$ for each $(x,y)\in V(G)\times V(H)$. Suppose that $(x,y)(x',y')\in E(G\times H)$ for some $(x,y),(x',y')\in V_{\emptyset}^f$. This shows that $yy'\in E(H)$ and that $g(y)=g(y')=\emptyset$. This is a contradiction. Therefore, $V_{\emptyset}^f$ is an independent set in $G\times H$. 

For any $(x,y)\in V(G)\times V(H)$ with weight $\emptyset$ under $f$, the equality $g(y)=\emptyset$ implies that $g\big{(}N_{H}(y)\big{)}=\{1,2\}$. Moreover, $N_{G}(x)\neq \emptyset$ since $G$ has no isolated vertex. So, $f\big{(}N_{G\times H}(x,y)\big{)}=f\big{(}N_{G}(x)\times N_{H}(y)\big{)}=g\big{(}N_{H}(y)\big{)}=\{1,2\}$. Therefore, $f$ is an OI$2$RD function of $G\times H$. We then have $\gamma_{oir2}(G\times H)\leq \omega(f)=\gamma_{oir2}(H)|V(G)|$. Interchanging the roles of $G$ and $H$ establishes the upper bound.

That the bound is sharp, can be seen as follows. We consider the graph $K_{m}\times K_{n}$ with $m,n\geq2$. By the structure, at most $\alpha(K_{m}\times K_{n})=\max\{m,n\}$ vertices of $K_{m}\times K_{n}$ are assigned $\emptyset$ under any $\gamma_{oir2}(G\times H)$-function. This shows that 
$$\gamma_{oir2}(K_{m}\times K_{n})\geq mn-\max\{m,n\}=\min\{m(n-1),n(m-1)\},$$
satisfying the equality in the upper bound. 
\end{proof}

Regarding the Cartesian product $G\Box H$, we observe that
$$\gamma_{oir2}(G\Box H)=\gamma_{oir2}(G^{-}\Box H^{-})+|I_{G}|\gamma_{oir2}(H)+|I_{H}|\gamma_{oir2}(G)-|I_G||I_H|.$$
Hence, in what follows, it suffices to assume that both $B$ and $H$ have no isolated vertices. 

\begin{theorem}\label{Cartesian}
Let $G$ and $H$ be two graphs with no isolated vertices. Then,
$$\gamma_{oir2}(G\Box H)\leq \alpha(G)\beta(H)+\beta(G)|V(H)|-min\{\beta(G),\beta(H)\}.$$
Furthermore, this bound is sharp.
\end{theorem}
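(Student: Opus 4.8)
The plan is to exhibit an explicit OI$2$RD function of $G\Box H$ whose weight realizes the claimed bound, combining an efficient assignment on one factor with a vertex-cover–based assignment on the other. First I would fix a maximum independent set $A$ of $G$ with $|A|=\alpha(G)$, so that $B_G=V(G)\setminus A$ is a minimum vertex cover of $G$ with $|B_G|=\beta(G)$, and likewise choose a minimum vertex cover $B_H$ of $H$ with $|B_H|=\beta(H)$ and independent complement $A_H=V(H)\setminus B_H$. The idea is that on the ``columns'' $\{x\}\times V(H)$ with $x\in B_G$ we can afford a cheap pattern because every vertex of $A$ has a neighbor in $B_G$; on the columns with $x\in A$ we still need to dominate, but only using horizontal edges into $B_G$-columns plus a few vertical edges coming from $B_H$.

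Concretely, I would try the following assignment $f$. For every $x\in B_G$ and every $y\in V(H)$, look at the column copy of $H$: put a rainbow-efficient ``alternating'' labelling that uses the vertex cover $B_H$ — e.g. on $\{x\}\times B_H$ assign the pair $\{1,2\}$ or the singletons $\{1\},\{2\}$ in a way that guarantees every vertex of $\{x\}\times A_H$ (which is independent within the column) sees both colors among its neighbours in $\{x\}\times B_H$ together with its horizontal neighbours. For $x\in A$, assign $\emptyset$ to all of $\{x\}\times A_H$ and a single color (say all of $\{1\}$, or the pattern needed) to $\{x\}\times B_H$; then a vertex $(x,y)$ with $x\in A$, $y\in A_H$ has $f(x,y)=\emptyset$, and it must see both colors — one color it can get from a vertical neighbour in $\{x\}\times B_H$ if that is labelled suitably, and the other from a horizontal neighbour $(x',y)$ with $x'\in B_G$ adjacent to $x$ in $G$ (such $x'$ exists since $A$ is independent and $G$ has no isolated vertices, so $A$ is dominated by $B_G$). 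The outer-independence of $V_\emptyset^f$ must be checked: two $\emptyset$-vertices in the same column are non-adjacent because $A_H$ is independent; in the same row $A\times\{y\}$ they are non-adjacent because $A$ is independent; and there are no diagonal edges in the Cartesian product, so $V_\emptyset^f$ is independent. Counting the weight: the $|A|$ cheap columns contribute $\beta(H)$ each (a constant color on $B_H$), giving $\alpha(G)\beta(H)$; the $\beta(G)$ remaining columns contribute at most $|V(H)|$ each under the alternating labelling, giving $\beta(G)|V(H)|$; and one can shave off $\min\{\beta(G),\beta(H)\}$ by noticing that in one full ``cheap'' column–or–row we can drop the forced color on a vertex cover of that single copy, since the other direction's labels already supply what is needed. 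That accounts for the $-\min\{\beta(G),\beta(H)\}$ correction.

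The main obstacle I anticipate is getting the two patterns to interlock so that \emph{every} $\emptyset$-vertex genuinely sees \emph{both} colors: the column pattern on $B_G$-columns must be chosen (probably as a $2$-coloring of $B_H$, or using some $\{1,2\}$-labels when a vertex of $A_H$ has only one neighbour in $B_H$) so that it is simultaneously a valid $2$-rainbow labelling of the column copy of $H$, while the $A$-columns rely on horizontal edges for one of their two colors — so the column pattern on $B_G$-columns must present a \emph{fixed} color (the same one, say $1$) to all horizontal neighbours that need it. Reconciling ``$B_G$-columns give a legitimate rainbow labelling of $H$'' with ``$B_G$-columns uniformly offer color $1$ horizontally'' is the delicate point, and is presumably why the bound has the shape $\alpha(G)\beta(H)+\beta(G)|V(H)|-\min\{\beta(G),\beta(H)\}$ rather than something cleaner. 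For sharpness I would exhibit a small family — likely products such as $K_m\Box K_n$ or $K_m\Box H$ for suitable $H$ — where an independence/counting lower bound matches: at most $\alpha(G\Box H)$ vertices can be labelled $\emptyset$, and one can often estimate $\alpha(G\Box H)$ and the forced extra weight from strong support vertices to force equality, exactly as was done for $K_m\times K_n$ in Theorem \ref{direct}.
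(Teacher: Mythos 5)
Your overall framework is the same as the paper's: take a maximum independent set $I$ of $G$ with cover $V(G)\setminus I$, a minimum vertex cover of $H$ with independent complement, make the $I$-columns cheap (all of $\{x\}\times (V(H)\setminus J)$ empty, one fixed colour on the cover part, contributing $\alpha(G)\beta(H)$) and colour the cover-columns fully (contributing $\beta(G)|V(H)|$). But the step that actually produces the $-\min\{\beta(G),\beta(H)\}$ term is where your argument has a genuine gap. You propose to ``drop the forced colour on a vertex cover of one single copy.'' A vertex cover is in general not an independent set, so emptying all of it inside one column immediately destroys outer-independence, and you give no reason why each newly emptied vertex would still see both colours. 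The savings cannot be concentrated in one copy; it has to be spread out as a transversal. The paper does this by assuming w.l.o.g.\ $\beta(G)\le\beta(H)$, writing $V(G)\setminus I=\{g_1,\dots,g_r\}$ and $V(H)\setminus J=\{h_1,\dots,h_s\}$ with $r\le s$, and emptying exactly the $r$ vertices $(g_i,h_i)$, one per cover-column at pairwise distinct $H$-coordinates. That set differs from itself and from $I\times J$ in both coordinates, so $V_\emptyset$ stays independent, and each $(g_i,h_i)$ receives $\{1\}$ from a $G$-neighbour of $g_i$ in $I$ (which exists because $I$ is a \emph{maximum} independent set) and $\{2\}$ from an $H$-neighbour inside its own column. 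This is also how the ``delicate point'' you flag dissolves: the cover-columns are not given a per-column rainbow labelling of $H$ at all; they are simply constant $\{2\}$ off the diagonal, which uniformly supplies the second colour horizontally to the $I$-columns, while the $I$-columns supply $\{1\}$ vertically and horizontally where needed.

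Two smaller points. First, your weight bookkeeping is internally inconsistent: you describe the cover-columns as coloured only on $B_H$ (possibly with $\{1,2\}$'s) yet charge them $|V(H)|$ each; and if any $A_H$-vertex in a cover-column were left empty, it would sit next to an empty vertex of an adjacent $I$-column in the same row, breaking independence — your independence check only covers same-column and same-$A$-row pairs. Second, sharpness needs an explicit verified family; the paper uses $P_m\Box K_n$ with $n\ge\lfloor m/2\rfloor+1$, where $\gamma_{oir2}(P_m\Box K_n)=m(n-1)$ matches the bound, whereas your $K_m\Box K_n$ suggestion is only sketched (it is the sharpness example for the direct-product bound, and here it would still have to be checked against this formula).
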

\begin{proof}
Without loss of generality, we may assume that $\beta(G)\leq \beta(H)$. Let $I$ and $J$ denote an $\alpha(G)$-set and an $\alpha(H)$-set, respectively. Suppose that $V(G)\setminus I=\{g_{1},\cdots,g_{r}\}$ and $V(H)\setminus J=\{h_{1},\cdots,h_{s}\}$, where $r=\beta(G)\leq \beta(H)=s$ by our assumption and the well-known Gallai theorem \cite{Gallai} (which states that $\alpha(F)+\beta(F)=|V(F)|$ for any graph $F$). It is a routine matter to see that 
$$S=\{(g_{i},h_{i})\mid 1\leq i\leq r\}\cup(I\times J)$$ 
is independent in $G\Box H$. Note that the subsets 
$$S,\ \ \ I\times(V(H)\setminus J)\ \ \ \mbox{and}\ \ \ \big{(}(V(G)\setminus I)\times V(H)\big{)}\setminus \{(g_{i},h_{i})\mid 1\leq i\leq r\}$$ 
form a partition of $V(G)\times V(H)$. We now define $h:V(G)\times V(H)\rightarrow \mathbb{P}(\{1,2\})$ by
\begin{equation*}
h((x,y))=\left \{
\begin{array}{lll}
\emptyset & \mbox{if}\ \ (x,y)\in S,\vspace{1.5mm}\\
\{1\} & \mbox{if}\ \ (x,y)\in I\times(V(H)\setminus J),\vspace{1.5mm}\\
\{2\} & \mbox{if}\ \ (x,y)\in\big{(}(V(G)\setminus I)\times V(H)\big{)}\setminus \{(g_{i},h_{i})\mid 1\leq i\leq r\}.
\end{array}
\right.
\end{equation*}

Notice that $S=V_{\emptyset}^{h}$ is independent in $G\Box H$ as mentioned above. Now let $(x,y)\in S$. We distinguish two possibilities depending on membership of $(x,y)$.\vspace{1mm}

($i$) Suppose that $(x,y)\in I\times J$. Since $H$ has no isolated vertices, $y$ is adjacent to a vertex $y'\in V(H)\setminus J$. So, $(x,y)$ is adjacent to $(x,y')$ with $h\big{(}(x,y')\big{)}=\{1\}$. Similarly, since $G$ has no isolated vertices, $x$ is adjacent to a vertex $x'\in V(G)\setminus I$. Moreover, $y\notin \{h_{1},\cdots,h_{s}\}$. This shows that $(x,y)$ is adjacent to $(x',y)$ with $h\big{(}(x',y)\big{)}=\{2\}$. Therefore, $h\big{(}N_{G\Box H}((x,y))\big{)}=\{1,2\}$.\vspace{1mm}

($ii$) Suppose that $(x,y)=(g_{i},h_{i})$ for some $1\leq i\leq r$. Since $I$ is an $\alpha(G)$-set, $x$ has a neighbor $x'\in I$. Therefore, $(x,y)$ is adjacent to $(x',y)\in I\times(V(H)\setminus J)$ for which $h\big{(}(x',y)\big{)}=\{1\}$. Because $H$ has no isolated vertices, there exists a vertex $y'\in V(H)$ adjacent to $y$. So, $(x,y)$ is adjacent to $(x,y')$. This implies that $(x,y')\notin \{(g_{i},h_{i})\mid 1\leq i\leq r\}$ as this set is independent. This shows that, $(x,y)$ is adjacent to $(x,y')\in\big{(}(V(G)\setminus I)\times V(H)\big{)}\setminus \{(g_{i},h_{i})\mid 1\leq i\leq r\}$ with $h\big{(}(x,y')\big{)}=\{2\}$. Therefore, $h\big{(}N_{G\Box H}((x,y))\big{)}=\{1,2\}$.\vspace{1mm}

The above discussion guarantees that $h$ is an OI$2$RD function of $G\Box H$. Thus, 
\begin{equation}\label{INE1}
\gamma_{oir2}(G\Box H)\leq \omega(h)=\alpha(G)\big{(}|V(H)|-\alpha(H)\big{)}+\big{(}|V(G)|-\alpha(G)\big{)}|V(H)|-\beta(G).
\end{equation}
On the other hand, we have $\alpha(F)+\beta(F)=|V(F)|$ for any graph $F$. Taking this fact into account, the desired upper bound follows from (\ref{INE1}).

That the upper bound is sharp, may be seen by considering $G=P_{m}$ and $H=K_{n}$ for $m,n\geq2$ and $n\geq \lfloor m/2\rfloor+1$. It is easily observed that $\gamma_{oir2}(P_{m}\Box K_{n})=m(n-1)$. Moreover, $m(n-1)=\alpha(P_{m})\beta(K_{n})+\beta(P_{m})|V(K_{n})|-\beta(P_{m})$. This completes the proof.
\end{proof}


\subsection{Rooted and corona products}

A \emph{rooted graph} is a graph in which one vertex is labeled in a special way to distinguish it from the other vertices. The special vertex is called the \emph{root} of the graph. Let $G$ be a labeled graph on $n$ vertices. Let ${\cal H}$ be a sequence of $n$ rooted graphs $H_1,\ldots,H_n$. The \emph{rooted product graph} $G( {\cal H})$ is the graph obtained by identifying the root of $H_i$ with the $i$th vertex of $G$ (see \cite{rooted-first}). We here consider the particular case of rooted product graphs where ${\cal H}$ consists of $n$  isomorphic rooted graphs \cite{Schwenk}. More formally, assuming that $V(G) = \{g_1,\ldots,g_n\}$ and that the root vertex of $H$ is $v$, we define the rooted product graph $G\circ_{v} H=(V,E)$, where $V=V(G)\times V(H)$ and
$$E=\displaystyle\bigcup_{i=1 }^n\{(g_i,h)(g_i,h')\mid hh'\in E(H)\}\cup \{(g_i,v)(g_j,v)\mid g_ig_j\in E(G)\}.$$

Note that subgraphs induced by $H$-layers of $G\circ_v H$ are isomorphic to $H$. We next study the OI$2$RD number of rooted product graphs.

\begin{theorem}\label{Rooted} 
Let $G$ be any graph of order $n$. If $H$ is any graph with root $v$, then
$$\gamma_{oir2}(G\circ_v H)\in \{n\gamma_{oir2}(H)-\alpha(G),n\gamma_{oir2}(H),n\gamma_{oir2}(H)+\beta(G)\}.$$
\end{theorem}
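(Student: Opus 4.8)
The plan is to work with the $n$ copies of $H$ inside $G\circ_v H$ and to control how an optimal function spreads its weight over them. Write $V(G)=\{g_{1},\dots,g_{n}\}$, let $H_{i}=\{(g_{i},h):h\in V(H)\}$ be the $i$th $H$-layer, and for a function $f$ on $G\circ_v H$ let $f_{i}$ denote its restriction to $H_{i}$, regarded as a function on $H$. The crucial structural fact is that the only edges of $G\circ_v H$ with exactly one endpoint in $H_{i}$ are incident with the root copy $(g_{i},v)$; hence every non-root vertex of $H_{i}$ has the same neighbourhood in $G\circ_v H$ as in $H$. Consequently, if $f$ is an OI$2$RD function of $G\circ_v H$, then $V_{\emptyset}^{f_{i}}$ is independent in $H$, the rainbow condition already holds at every non-root vertex of $H_{i}$, and the only obstruction to $f_{i}$ being an OI$2$RD function of $H$ is that $(g_{i},v)\in V_{\emptyset}^{f}$ while $f_{i}\big(N_{H}(v)\big)\neq\{1,2\}$. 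Put $D=\{g_{i}:(g_{i},v)\in V_{\emptyset}^{f}\}$; since two adjacent empty roots would contradict the independence of $V_{\emptyset}^{f}$, the set $D$ is independent in $G$, so $|D|\leq\alpha(G)$. Throughout I would assume $G$ connected (so, for $n\geq 2$, without isolated vertices) and treat the case $v$ isolated in $H$, in which $G\circ_v H$ splits into $n$ copies of $H-v$ together with a copy of $G$ on the roots, as a separate routine check.

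For the lower bound I would first show $\omega(f_{i})\geq\gamma_{oir2}(H)-1$ for every $i$: if $g_{i}\notin D$ this is clear since $f_{i}$ is an OI$2$RD function of $H$; if $g_{i}\in D$ then $v$ has a coloured $H$-neighbour (otherwise $v$ and an empty $H$-neighbour both lie in $V_{\emptyset}^{f}$), so adding to the root the single colour missing from $f_{i}\big(N_{H}(v)\big)$ produces an OI$2$RD function of $H$ of weight $\omega(f_{i})+1$. Moreover $\omega(f_{i})=\gamma_{oir2}(H)-1$ forces $g_{i}\in D$, so at most $|D|\leq\alpha(G)$ layers have deficient weight, and summing gives $\gamma_{oir2}(G\circ_v H)\geq n\gamma_{oir2}(H)-\alpha(G)$. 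I would then split according to the behaviour of optimal functions of $H$ at $v$: call $(H,v)$ of type (I) if $H$ admits a function $g$ of weight $\gamma_{oir2}(H)-1$ with $g(v)=\emptyset$, $V_{\emptyset}^{g}$ independent, the rainbow condition holding at every vertex other than $v$, and $g\big(N_{H}(v)\big)\neq\emptyset$; of type (II) if it is not of type (I) and some $\gamma_{oir2}(H)$-function of $H$ assigns a non-empty set to $v$; and of type (III) if it is not of type (I) and every $\gamma_{oir2}(H)$-function of $H$ assigns $\emptyset$ to $v$. In types (II) and (III) no layer can have deficient weight (such a layer's restriction would be a function as in type (I) of weight $\gamma_{oir2}(H)-1$), so $\omega(f_{i})\geq\gamma_{oir2}(H)$ for all $i$ and $\gamma_{oir2}(G\circ_v H)\geq n\gamma_{oir2}(H)$; in type (III) each layer with a non-empty root carries weight at least $\gamma_{oir2}(H)+1$ (its restriction is an OI$2$RD function of $H$ with non-empty root, and by definition of type (III) no such function has weight $\gamma_{oir2}(H)$), and the indices of these layers form $V(G)\setminus D$, a vertex cover of $G$, hence of size at least $\beta(G)$, which gives $\gamma_{oir2}(G\circ_v H)\geq n\gamma_{oir2}(H)+\beta(G)$.

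For the matching upper bounds I would exhibit explicit OI$2$RD functions. In type (II), copying a $\gamma_{oir2}(H)$-function with non-empty root into every layer creates no empty roots and has weight $n\gamma_{oir2}(H)$. In type (III), fixing a $\gamma_{oir2}(H)$-function $g$ (necessarily with $g(v)=\emptyset$, hence $g\big(N_{H}(v)\big)=\{1,2\}$) and a minimum vertex cover $C$ of $G$, one puts $g$ on the layers indexed by $V(G)\setminus C$ and $g$ with one extra colour placed on the root on the layers indexed by $C$; the empty roots form the independent set $V(G)\setminus C$ and are rainbow-dominated inside their own layer, so the weight is $n\gamma_{oir2}(H)+\beta(G)$. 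In type (I), a witness $g$ must have $g\big(N_{H}(v)\big)$ equal to a single colour, say $\{1\}$; adding $\{2\}$ to the root turns $g$ into a $\gamma_{oir2}(H)$-function $g'$ of $H$ with root coloured $\{2\}$. Taking a maximum independent set $D^{*}$ of $G$, assign $g$ to the layers indexed by $D^{*}$ and $g'$ to all remaining layers; for $n\geq 2$ every vertex of $D^{*}$ has a neighbour outside $D^{*}$, so each empty root sees colour $1$ inside its layer and colour $2$ from a neighbouring layer while $V_{\emptyset}$ remains independent, and the total weight is $n\gamma_{oir2}(H)-|D^{*}|=n\gamma_{oir2}(H)-\alpha(G)$. (For $n=1$ one has $G\circ_v H\cong H$ and the value $\gamma_{oir2}(H)$ already lies in the stated set.) Comparing with the lower bounds, $\gamma_{oir2}(G\circ_v H)$ equals $n\gamma_{oir2}(H)-\alpha(G)$, $n\gamma_{oir2}(H)$, or $n\gamma_{oir2}(H)+\beta(G)$ according as $(H,v)$ is of type (I), (II), or (III).

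I expect the main obstacle to be the type-(I) construction, i.e. checking that the full saving $\alpha(G)$ is realizable simultaneously: every empty root of a maximum independent set of layers must be supplied with an \emph{external} helper of the correct colour without forcing any layer above weight $\gamma_{oir2}(H)$, and this is exactly where the colour-symmetry step --- producing, from the weight-$(\gamma_{oir2}(H)-1)$ function, a helper of weight $\gamma_{oir2}(H)$ carrying the complementary colour on its root --- is essential; it is also what prevents $\gamma_{oir2}(G\circ_v H)$ from landing strictly between $n\gamma_{oir2}(H)-\alpha(G)$ and $n\gamma_{oir2}(H)$. The remaining work is bookkeeping: verifying that no layer can drop below weight $\gamma_{oir2}(H)-1$, that in types (II) and (III) deficient layers are genuinely impossible, and that the degenerate cases ($v$ isolated in $H$, $n=1$, and --- if one wants to relax connectivity of $G$ --- a component-by-component treatment) also land in the asserted set.
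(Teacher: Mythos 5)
For a graph $G$ with no isolated vertices and a root $v$ that is not isolated in $H$, your argument is correct and is essentially the paper's proof in a lightly repackaged form: your types (I), (II), (III) coincide with the paper's Subcase~4.1, Subcase~4.2 and Case~3 respectively (a type-(I) witness $g$ of weight $\gamma_{oir2}(H)-1$ yields, by adding the missing colour at $v$, a $\gamma_{oir2}(H)$-function that is nonempty at $v$ and has no empty neighbour of $v$, and conversely), the layer-restriction lower bounds $\omega(f_i)\geq\gamma_{oir2}(H)-1$ with deficiency forcing an empty root, the independence of the set of empty roots, and the three explicit constructions are the same as in the paper. Your "deficient witness" formulation does unify the two separate contradiction arguments the paper runs in Case~3 and Subcase~4.2, which is a small gain in economy, but it is not a different route.

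The genuine weak point is your final claim that the degenerate cases (root $v$ isolated in $H$; disconnected $G$, in particular isolated vertices of $G$; beyond $n=1$, which you handle correctly) "also land in the asserted set" by a routine check. They do not. Take $H=P_2$ rooted at an endpoint $v$ (type (I), $\gamma_{oir2}(H)=2$) and $G=K_1\cup K_2$: then $G\circ_v H=P_2\cup P_4$, so $\gamma_{oir2}(G\circ_v H)=2+3=5$, while the asserted set is $\{3\cdot2-\alpha(G),\,3\cdot2,\,3\cdot2+\beta(G)\}=\{4,6,7\}$; component-by-component summation leaves the three-element set because a $K_1$-component contributes $\gamma_{oir2}(H)$, not $\gamma_{oir2}(H)-1$. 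Similarly, with $v$ isolated in $H$ (say $H=\overline{K_2}$) and $G=P_4$ one gets $\gamma_{oir2}(G\circ_v H)=\gamma_{oir2}(P_4)+4=7\notin\{6,8,10\}$. So the deferred cases cannot be "checked"; the statement itself fails in that generality. To be fair, this blind spot is inherited from the paper: its Subcase~4.1 construction assigns $\emptyset$ to the root of every layer over an $\alpha(G)$-set, and such a root over an isolated vertex of $G$ (or with $N_H(v)=\emptyset$) sees only one colour, so the paper's proof of "any graph $G$, any rooted $H$" has the same defect. For your write-up, either add the hypotheses that $G$ has no isolated vertices and $v$ is not isolated in $H$ (under which your proof, like the paper's core argument, is complete), or state explicitly how the value can escape the three listed quantities otherwise; do not present the extension as routine bookkeeping.
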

\begin{proof}
We first prove that 
\begin{equation}\label{lowerupper}
n\gamma_{oir2}(H)-\alpha(G)\leq \gamma_{oir2}(G\circ_v H)\leq n\gamma_{oir2}(H)+\beta(G).
\end{equation}
In order to prove the lower bound, let $f$ be a $\gamma_{oir2}(G\circ_v H)$-function. If $f_{x}=f\mid_{(G\circ_v H)[\{x\}\times V(H)]}$ is an OI$2$RD function of $H_{x}=(G\circ_v H)[\{x\}\times V(H)]\cong H$ for every $x\in V(G)$, then $\gamma_{oir2}(H)\leq \omega(f_{x})$ for all $x\in V(G)$. Therefore, 
$$\gamma_{oir2}(G\circ_v H)=\omega(f)=\sum_{x\in V(G)}\omega(f_{x})\geq n\gamma_{oir2}(H)>n\gamma_{oir2}(H)-\alpha(G).$$ 
So, in what follows, we may assume that $f_{x}$ is not an OI$2$RD function of $H_{x}$ for some $x\in V(G)$. Since $f$ is an OI$2$RD function of $G\circ_v H$, there are no two adjacent vertices of $H_{x}$ which are assigned $\emptyset$ under $f_{x}$. Therefore, there exists a vertex $(x,y)$ of $H_{x}$ with $f_{x}\big{(}(x,y)\big{)}=\emptyset$ for which $f_{x}(N_{H_{x}}\big{(}(x,y))\big{)}\neq\{1,2\}$. By the structure of rooted products and since $f$ is an OI$2$RD function of $G\circ_v H$, it follows that $y=v$. We now consider two cases.

\textit{Case $1$.} Let $(x,v)$ be an isolated vertex of $H_{x}$. In such a situation, $G\circ_v H$ is isomorphic to the disjoint union of one copy of $G$ and $n$ copies of the graph $H-v$. On the other hand,
\begin{equation*}
\begin{array}{lcl} 
\omega(f\mid_{V(G)})=\gamma_{oir2}(G)&=&|V_{\{1\}}\cap V(G)|+|V_{\{2\}}\cap V(G)|+2|V_{\{1,2\}}\cap V(G)|\\
&\geq& n-|V_{\emptyset}\cap V(G)|\geq n-\alpha(G)=\beta(G).
\end{array}
\end{equation*}
Therefore, $\gamma_{oir2}(G\circ_v H)=\gamma_{oir2}(G)+n\gamma_{oir2}(H-v)=\gamma_{oir2}(G)+n(\gamma_{oir2}(H)-1)\geq \beta(G)+n(\gamma_{oir2}(H)-1)=n\gamma_{oir2}(H)-\alpha(G)$.

\textit{Case $2$.} Suppose that $(x,v)$ is not an isolated vertex of $H_{x}$. Since $f_{x}((x,v))=\emptyset$, it follows that $(x,v)$ is adjacent to a vertex $(x,w)$ of $H_{x}$ for which $f_{x}\big{(}(x,w)\big{)}=\{1\}$ or $\{2\}$. Now the assignment $f_{x}'\big{(}(x,v)\big{)}=\{1\}$ and $f_{x}'\big{(}(x,u)\big{)}=f_{x}\big{(}(x,u)\big{)}$ for the other vertices $u\in V(H)$ defines an OI$2$RD function of $H_{x}$ with weight $\omega(f_{x})+1$. Therefore, $\gamma_{oir2}(H)\leq \omega(f_{x})+1$ for each vertex $x\in V(G)$ for which $f_{x}$ is not an OI$2$RD function of $H_{x}$. 

Now let $S$ be a $\beta(G)$-set. This shows that at least $\beta(G)$ vertices in $V(G)\times \{v\}$ are assigned at least $\{1\}$ or $\{2\}$ under $f$. Moreover, $f_{x}$ is an OI$2$RD function of $H_{x}$ for each vertex $x$ with $f\big{(}(x,v)\big{)}\neq \emptyset$. Therefore,
\begin{equation*}
\begin{array}{lcl}
\gamma_{oir2}(G\circ_v H)&=&\sum_{x\in V(G)}\omega(f_{x})=\sum_{x\in V(G)\setminus S}\omega(f_{x})+\sum_{x\in S}\omega(f_{x})\\
&\geq&\big{(}n-\beta(G)\big{)}(\gamma_{oir2}(H)-1)+\beta(G)\gamma_{oir2}(H)=n\gamma_{oir2}(H)-\alpha(G). 
\end{array}
\end{equation*}

We now prove the upper bound. Suppose that there exists a $\gamma_{oir2}(H)$-function $f$ for which $f(v)\neq \emptyset$. Clearly, $f$ results in a $\gamma_{oir2}(H_{x})$-function $f_{x}$ for each $x\in V(G)$. Therefore, $\gamma_{oir2}(G\circ_v H)\leq \sum_{x\in V(G)}\omega(f_{x})=n\gamma_{oir2}(H)\leq n\gamma_{oir2}(H)+\beta(G)$. Assume now that every $\gamma_{oir2}(H)$-function $f$ assigns $\emptyset$ to $v$. Since the vertices with weight $\emptyset$ under $f$ are independent, it follows that one $1$ or one $2$, say one $2$, belongs to $f\big{(}N_{H}(v)\big{)}$. Now let $S$ be a $\beta(G)$-set. We define $g:V(G)\times V(H)\rightarrow \mathbb{P}(\{1,2\})$ by
\begin{equation*}
g\big{(}(x,y)\big{)}=\left \{
\begin{array}{lll}
f(y) & \mbox{if}\ \ y\neq v,\vspace{1.5mm}\\
\{1\} & \mbox{if}\ \ y=v\ \mbox{and}\ x\in S,\vspace{1.5mm}\\
\emptyset & \mbox{if}\ \ y=v\ \mbox{and}\ x\in V(G)\setminus S.
\end{array}
\right.
\end{equation*}
It is then easy to check that $g$ is an OI$2$RD function of $G\circ_v H$ with weight $\omega(g)=n\gamma_{oir2}(H)+\beta(G)$, implying the upper bound.

Note that if $G$ is edgeless, then $\gamma_{oir2}(G\circ_v H)=n\gamma_{oir2}(H)=n\gamma_{oir2}(H)+\beta(G)$. So, in what follows we assume that $G$ is not edgeless. We distinguish the following cases depending on the behavior of $\gamma_{oir2}(H)$-functions.

\textit{Case $3$.} Let every $\gamma_{oir2}(H)$-function assign $\emptyset$ to $v$. Let $f$ be a $\gamma_{oir2}(G\circ_v H)$-function. Suppose to the contrary that there exists a vertex $x\in V(G)$ for which $\omega(f_{x})\leq \gamma_{oir2}(H)-1$, where $f_{x}=f|_{\{x\}\times V(H)}$. By the properties of the rooted product graph $G\circ_v H$ and since $f_{x}$ is not an OI$2$RD function of $H_{x}$, we have $f(x,v)=f_{x}\big{(}(x,v)\big{)}=\emptyset$ and $|f\big{(}N_{H_{x}}(x,v)\big{)}|=1$. Now $g(y)=f_{x}(x,y)$ for $y\in V(H)\setminus\{v\}$, and $g(v)=\{1\}$ defines an OI$2$RD function of $H$ with $\omega(g)=\gamma_{oir2}(H)$ for which $g(v)\neq \emptyset$. This is a contradiction. Therefore, $\omega(f_{x})\geq \gamma_{oir2}(H)$ for each $x\in V(G)$. Set $A=\{x\in V(G)\mid w(f_{x})=\gamma_{oir2}(H)\}$ and $B=\{x\in V(G)\mid w(f_{x})>\gamma_{oir2}(H)\}$. Obviously, $|A|+|B|=n$. We then have
\begin{equation} \label{root5}
\omega(f)\geq|A|\gamma_{oir2}(H)+|B|(\gamma_{oir2}(H)+1)=n\gamma_{oir2}(H)+|B|.
\end{equation}
If $f_{x}\big{(}(x,v)\big{)}\neq \emptyset$ for some vertex $x\in A$, then it is easy to see that $g(y)=f_{x}\big{(}(x,y)\big{)}$ for all $y\in V(G)$ is an OI$2$RD function of $H$ with weight $\gamma_{oir2}(H)$ for which $g(v)\neq \emptyset$. This is a contradiction. Therefore, $f_{x}\big{(}(x,v)\big{)}=\emptyset$ for all $x\in A$. This implies that $\{(x,v)\mid x\in A\}$ is independent in $(G\circ_v H)[V(G)\times \{v\}]\cong G$. Hence $|A|\leq \alpha(G)$, implying that $|B|=n-|A|\geq \beta(G)$. This results in $\gamma_{oir2}(G\circ_v H)=\omega(f)\geq n\gamma_{oir2}(H)+\beta(G)$ by (\ref{root5}). Therefore, $\gamma_{oir2}(G\circ_v H)=n\gamma_{oir2}(H)+\beta(G)$ by (\ref{lowerupper}). 

\textit{Case $4$.} Suppose that $g(v)\neq \emptyset$ for some $\gamma_{oir2}(H)$-function $g$. We need to consider two subcases depending on the collection of such functions $g$.

\textit{Subcase $4$.$1$.} Let there exist such a function $g$ under which $v$ is not adjacent to any vertex of $H$ with weight $\emptyset$. Notice that at least one $1$ or one $2$, say one $1$, belongs to $g\big{(}N_{H}(v)\big{)}$ in $H$. Let $I$ be an $\alpha(G)$-set. Then, $h:V(G)\times V(H)\rightarrow \mathbb{P}(\{1,2\})$ defined by
\begin{equation*}
h\big{(}(x,y)\big{)}=\left \{
\begin{array}{lll}
g(y) & \mbox{if}\ \ x\in V(G)\ \mbox{and}\ y\neq v,\vspace{1.5mm}\\
\emptyset & \mbox{if}\ \ x\in I\ \mbox{and}\ y=v,\vspace{1.5mm}\\
\{2\} & \mbox{if}\ \ x\in V(G)\setminus I\ \mbox{and}\ y=v,
\end{array}
\right.
\end{equation*}
is an OI$2$RD function of $G\circ_v H$ with weight $n\gamma_{oir2}(H)-\alpha(G)$. This implies that $\gamma_{oir2}(G\circ_v H)=n\gamma_{oir2}(H)-\alpha(G)$ in view of (\ref{lowerupper}).

\textit{Subcase $4$.$2$.} Suppose now that for all such functions $g$, $v$ is adjacent to a vertex of $H$ with weight $\emptyset$ under $g$. Note that $h\big{(}(x,y)\big{)}=g(y)$ for all $x\in V(G)$ and $y\in V(H)$ defines an OI$2$RD function of $G\circ_v H$ with weight $n\gamma_{oir2}(H)$. So, $\gamma_{oir2}(G\circ_v H)\leq n\gamma_{oir2}(H)$. We again suppose that $f$ is a $\gamma_{oir2}(G\circ_v H)$-function. Let $\omega(f_{x})\leq \gamma_{oir2}(H)-1$ for some $x\in V(G)$. This shows that $f_{x}\big{(}(x,v)\big{)}=\emptyset$, for otherwise $f_{x}$ would be an OI$2$RD function of $H_{x}\cong H$ with a weight less than $\gamma_{oir2}(H)$, which is impossible. Therefore, all neighbors of $(x,v)$ in $\{x\}\times V(H)$ must have nonempty weights under $f$. But the assignment $h(v)=\{1\}$, and $h(y)=f\big{(}(x,y)\big{)}$ for other vertices defines an OI$2$RD function of $H$ with weight $\gamma_{oir2}(H)$ (that is, a $\gamma_{oir2}(H)$-function) for which no vertices adjacent to $v$ are assigned $\emptyset$ under $h$. This contradicts the fact that all $\gamma_{oir2}(H)$-functions assigning a nonempty weight to $v$ assign $\emptyset$ to a neighbor of it. The above argument guarantees that $\omega(f_{x})\geq \gamma_{oir2}(H)$ for all $x\in V(G)$. Therefore, $\gamma_{oir2}(G\circ_v H)=\omega(f)\geq n\gamma_{oir2}(H)$. This results in $\gamma_{oir2}(G\circ_v H)=n\gamma_{oir2}(H)$.

All in all, we have shown that $\gamma_{oir2}(G\circ_v H)$ belongs to $\{n\gamma_{oir2}(H)-\alpha(G),n\gamma_{oir2}(H),n\gamma_{oir2}(H)+\beta(G)\}$. This completes the proof.
\end{proof}

Let $G$ and $H$ be graphs where $V(G)=\{v_1,\ldots,v_{n}\}$. We recall that the corona $G\odot H$ of graphs $G$ and $H$ is obtained from the disjoint union of $G$ and $n$ disjoint copies of $H$, say $H_1,\ldots,H_{n}$, such that for all $i\in \{1,\dots,n\}$, the vertex $v_i\in V(G)$ is adjacent to every vertex of $H_i$.

Unlike the cases of Cartesian and direct products, the existence of isolated vertices in $H$ is irrelevant to the number of components of $G\odot H$. In particular, if $H$ has isolated vertices, $G\odot H$ remains connected when $G$ is connected. In fact, as we next show, the exact formula for $\gamma_{oir2}(G\odot H)$ changes in the case when $H$ has isolated vertices. In particular, when $|V(H)|=1$, it establishes the NP-hardness of the problem of computing $\gamma_{oir2}$ even for some special families of graphs (see \cite{gj} and \cite{mm}). 

Cabrera Mart\'{i}nez \cite{acm} proved that $\gamma_{oir2}(G\odot H)=|V(G)|(|V(H)|+1)-|V(G)|\alpha(H)$ for all graphs $G$ and $H$ with no isolated vertices. In what follows, we present an exact formula for $\gamma_{oir2}(G\odot H)$ for any graph $G$ with no isolated vertices and arbitrary graph $H$. By the way, the method by which we prove the following theorem is different from that of \cite{acm}.

\begin{theorem}
Let $G$ be a graph of order $n$ with no isolated vertices and let $H$ be any graph with $i_{H}$ isolated vertices. If $|V(H)|=1$, then 
$$\gamma_{oir2}(G\odot H)=n+\beta(G).$$ 
If $|V(H)|\geq2$, then
$$\gamma_{oir2}(G\odot H)=\left\{
\begin{array}
[c]{ccc}%
n(\beta(H)+1) & \text{if }\ i_{H}=0,\\
n(\beta(H)+2) & \text{if }\ i_{H}\neq0.
\end{array}
\right.$$
\end{theorem}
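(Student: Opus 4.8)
The plan is to treat the three stated values separately, always exploiting the fact that in $G\odot H$ the copy $H_i$ together with its attachment vertex $v_i$ behaves almost independently of the rest of the graph, except that $v_i$ may be dominated ``from outside'' via $G$.

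First I would dispose of the case $|V(H)|=1$. Here $G\odot H$ is exactly the graph obtained from $G$ by attaching one pendant vertex to each vertex of $G$; equivalently, each $v_i$ is a support vertex of a leaf $\ell_i$. For any OI$2$RD function $f$, since $\ell_i$ has only neighbor $v_i$, if $f(\ell_i)=\emptyset$ then $f(v_i)=\{1,2\}$, and in any case $|f(\ell_i)|+|f(v_i)|\ge 1$ with equality forcing exactly one of them to get a singleton and the other $\emptyset$; moreover the set of $v_i$'s receiving $\emptyset$ must be independent in $G$ (their leaves are already $\emptyset$, so two adjacent empty support vertices are forbidden directly, and also each such $v_i$ needs $\{1,2\}$ on its leaf, impossible). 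Hence the number of $v_i$ with $f(v_i)=\emptyset$ is at most $\alpha(G)$, and a short weight count gives $w(f)\ge n+\beta(G)$. For the matching upper bound, take a $\beta(G)$-set $S$, put $\{1\}$ on each $v_i\in S$ and on each $v_i\notin S$, $\emptyset$ on the corresponding leaves when $v_i\in S$ (wait—rather: assign $\{1\}$ to all $v_i$, $\emptyset$ to leaves of $v_i\in S$ and a singleton to leaves of $v_i\notin S$; then check). I would simply exhibit: $f(v_i)=\{1\}$ for all $i$, $f(\ell_i)=\emptyset$ if $v_i\in S$ and $f(\ell_i)=\{1\}$ otherwise is an OI$2$RD function of weight $n+(n-\beta(G))\cdot 0 +\ldots$; the precise optimal choice is to make $V_\emptyset=\{\ell_i: v_i\in S\}$, which is independent, giving weight $n+\beta(G)$.

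Next, for $|V(H)|\ge 2$ with $i_H=0$, I would invoke (or re-derive) Cabrera Mart\'{i}nez's identity $\gamma_{oir2}(G\odot H)=n(|V(H)|+1)-n\alpha(H)=n(|V(H)|-\alpha(H))+n=n\beta(H)+n=n(\beta(H)+1)$ via the Gallai identity $\alpha(H)+\beta(H)=|V(H)|$; this is exactly the claimed value, and since the paper says a new proof is intended, I would give the lower bound by restricting $f$ to each $H_i\cup\{v_i\}$ and arguing that within that ``augmented copy'' one needs weight at least $\beta(H)+1$ — the $+1$ coming from $v_i$, which cannot sit in $V_\emptyset$ for free because its $H_i$-neighbours are all in one copy and to rainbow-dominate an empty $v_i$ one would need two colours inside $N_G(v_i)$, forcing weight on $G$; a global accounting then yields $n(\beta(H)+1)$, with the construction assigning a $\beta(H)$-set of each $H_i$ the colours and one extra colour to each $v_i$ (or to a neighbour), independence of $V_\emptyset$ being automatic since each empty vertex lies in a distinct copy with all its copy-neighbours coloured.

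Finally, the case $i_H\ne 0$, $|V(H)|\ge 2$, is where the main obstacle lies. The point is that an isolated vertex $u$ of $H_i$ has, in $G\odot H$, the single neighbour $v_i$; so if $f(u)=\emptyset$ then $f(v_i)=\{1,2\}$, costing $2$ at $v_i$, whereas if $u$ is coloured it costs at least $1$. One shows that the ``best'' behaviour in copy $i$ now costs $\beta(H)+2$ rather than $\beta(H)+1$: either some isolated vertex of $H_i$ is empty, forcing $f(v_i)=\{1,2\}$ (weight $\ge 2$ at $v_i$) on top of $\beta(H^-)$ inside $H_i^-$ — and I'd need $\beta(H)=\beta(H^-)+i_H \ge \beta(H^-)+1$ to make the bookkeeping close — or every isolated vertex of $H_i$ is coloured, contributing $i_H\ge 1$ extra beyond what a $\beta$-set of $H$ would suggest. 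The delicate part is ruling out the intermediate possibility that $v_i$ is empty (dominated through $G$) while still paying only $\beta(H)+1$ inside the copy: if $v_i\in V_\emptyset$ then every vertex of $H_i$, in particular every isolated one, is a neighbour of the empty vertex $v_i$, so none of them is empty, which already forces all $i_H$ isolated vertices of $H_i$ to be coloured, again giving the $+2$ after accounting. I would organize this as: (i) prove $w(f|_{H_i\cup\{v_i\}})\ge \beta(H)+2$ for every $i$ by the case split above (empty isolated vertex $\Rightarrow$ $\{1,2\}$ on $v_i$; nonempty $v_i$ with all isolateds coloured $\Rightarrow$ $\beta(H^-)+i_H+1$; using $\beta(H)=\beta(H^-)+i_H$ and $i_H\ge1$ in every branch), summing to the lower bound $n(\beta(H)+2)$; (ii) for the upper bound, in each copy colour a $\beta(H^-)$-set of $H_i^-$ plus all $i_H$ isolated vertices of $H_i$ with $\{1\}$, and assign $f(v_i)=\{2\}$, checking that the empty vertices (an $\alpha(H^-)$-set inside each copy) are independent and each is rainbow-dominated using a neighbour coloured $\{1\}$ in its copy and $v_i$ coloured $\{2\}$ — this has weight $n(\beta(H^-)+i_H+1)=n(\beta(H)+1)$, which is one short, so in fact the tight construction must put $\{1,2\}$ on each $v_i$ and leave one isolated vertex of each copy empty: colour a $\beta(H^-)$-set of $H_i^-$ and $i_H-1$ of the isolated vertices with $\{1\}$, set $f(v_i)=\{1,2\}$, and leave the last isolated vertex and an $\alpha(H^-)$-set empty, for total weight $n(\beta(H^-)+(i_H-1)+2)=n(\beta(H^-)+i_H+1)=n(\beta(H)+1)$ — still one short, indicating I must recheck which term carries the $\beta(H)+2$; the correct reading is presumably that an empty vertex of $H_i^-$ needs \emph{two} colours among its neighbours, and with $v_i$ carrying only $\{1,2\}$ once, the internal $H_i^-$ part must itself be rainbow-dominating, costing $\gamma_{oir2}(H^-)$ not merely $\beta(H^-)$, and $\gamma_{oir2}(H^-)\ge \beta(H^-)+1$ in the relevant range — so the honest plan is to first pin down the exact per-copy minimum $\min\{\,$weight on $H_i\cup\{v_i\}$ over all locally-valid configurations$\,\}$ as a small optimization, show it equals $\beta(H)+2$ when $i_H\ne 0$ and $\beta(H)+1$ when $i_H=0$, and then glue. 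The gluing direction is routine because distinct copies interact only through $G$, and $G$ having no isolated vertices guarantees each $v_i$ has a $G$-neighbour available for domination; the genuine work, and the step I expect to be hardest, is the exact per-copy lower bound in the $i_H\ne 0$ case, i.e.\ proving no clever distribution of colours between $v_i$ and the isolated vertices of $H_i$ beats $\beta(H)+2$.
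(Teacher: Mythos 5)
Your overall strategy -- account per ``augmented copy'' $V(H_i)\cup\{v_i\}$, pin down the per-copy minimum ($\beta(H)+1$ if $i_H=0$, $\beta(H)+2$ if $i_H\neq 0$), and glue -- is exactly the paper's strategy (the paper formalizes the gluing by writing $\gamma_{oir2}(G\odot H)$ as a minimum, over all partitions $(V_{\emptyset},V_{\{1\}},V_{\{2\}},V_{\{1,2\}})$ of $V(G)$ with $V_{\emptyset}$ independent, of a linear expression in the part sizes). But the execution has genuine gaps. First, in the case $|V(H)|=1$ your extremal construction is invalid: you put $\emptyset$ on leaves $\ell_i$ with $v_i\in S$ while $f(v_i)=\{1\}$, so those leaves, whose unique neighbour is $v_i$, are not rainbow dominated; moreover its weight is $n+(n-\beta(G))=n+\alpha(G)$, not $n+\beta(G)$. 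The correct optimum (as in the paper) never leaves a leaf empty: put $\emptyset$ on a maximum independent set $I$ of $G$, $\{1\}$ on $V(G)\setminus I$, and $\{2\}$ on every leaf; since $G$ has no isolated vertices each empty $v_i$ sees $\{1\}$ in $G$ and $\{2\}$ on its leaf, giving weight $(n-\alpha(G))+n=n+\beta(G)$.

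Second, and more seriously, the case $i_H\neq 0$ -- which you yourself identify as the genuine work -- is not carried out, and the attempts are derailed by the false identity $\beta(H)=\beta(H^-)+i_H$; in fact $\beta(H)=\beta(H^-)$ and $\alpha(H)=\alpha(H^-)+i_H$. With the correct identity your two trial constructions have weights $n(\beta(H)+i_H+1)$ and $n(\beta(H)+i_H+1)$, which are $\geq n(\beta(H)+2)$, so there is no contradiction and no ``missing $+1$''; your subsequent ``correct reading'' that the copy must internally be outer-independent rainbow dominating (cost $\gamma_{oir2}(H^-)$) is also wrong, since once $v_i$ carries $\{1,2\}$ every empty vertex of the copy is already rainbow dominated through $v_i$. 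The tight construction you never reach is: $f(v_i)=\{1,2\}$ for every $i$, $\emptyset$ on a maximum independent set of $H_i$ (which contains all its isolated vertices), and $\{1\}$ on the remaining $\beta(H)$ vertices, costing exactly $\beta(H)+2$ per copy. The matching per-copy lower bound, which you only promise, is the three-way split on $g(v_i)$: if $g(v_i)=\emptyset$ then outer-independence forces every vertex of $H_i$ to be coloured, cost $\geq|V(H)|=\alpha(H)+\beta(H)\geq\beta(H)+2$ (here you need $\alpha(H)\geq 2$, which holds since $i_H\geq1$ and $|V(H)|\geq2$); if $g(v_i)$ is a singleton then no isolated vertex of $H_i$ can be empty and the empty vertices form an independent set of $H^-$, so the cost is $\geq 1+\beta(H)+i_H\geq\beta(H)+2$; if $g(v_i)=\{1,2\}$ the cost is $\geq 2+\beta(H)$. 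Without these pieces the theorem's hardest branch is unproved.
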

\begin{proof}
We first suppose that $H\cong K_{1}$. Let $V(G)=\{v_{1},\cdots,v_{n}\}$. Then, $G\odot K_{1}$ is obtained from $G$ by joining $n$ new vertices $u_{1},\cdots,u_{n}$ to $v_{1},\cdots,v_{n}$, respectively. Let $f$ be a $\gamma_{oir2}(G\odot K_{1})$-function. Clearly, $1\leq|f(v_{i})|+|f(u_{i})|\leq2$ for each $1\leq i\leq n$. If $|f(v_{i})|+|f(u_{i})|=2$ for some $1\leq i\leq n$, we may assume that $f(v_{i})=\{1,2\}$ and $f(u_{i})=\emptyset$. Moreover, $f(u_{i})=\{1\}$ or $\{2\}$ whenever $|f(v_{i})|+|f(u_{i})|=1$. Let $A=\{1\leq i\leq n\mid |f(v_{i})|+|f(u_{i})|=1\}$. Note that 
\begin{equation}\label{Tak}
\gamma_{oir2}(G\odot K_{1})=\sum_{i\notin A}(|f(v_{i})|+|f(u_{i})|)+\sum_{i\in A}(|f(v_{i})|+|f(u_{i})|)=2n-|A|.
\end{equation}

On the other hand, $|A|\leq \alpha(G)$ as the vertices $v_{i}$, for which $i\in A$, are assigned $\emptyset$ under $f$. So, $\gamma_{oir2}(G\odot K_{1})\geq 2n-\alpha(G)=n+\beta(G)$ by (\ref{Tak}). 

Let $I$ be a $\alpha(G)$-set. We can observe that the assignment $\emptyset$ to the vertices in $I$, $\{1\}$ to the other vertices of $G$, and $\{2\}$ to $u_{1},\cdots,u_{n}$ defines an OI$2$RD function of $G\odot K_{1}$ with weight $n+\beta(G)$. Therefore, $\gamma_{oir2}(G\odot K_{1})\leq n+\beta(G)$. This results in the equality for the case when $|V(H)|=1$.

Now let $|V(H)|\geq2$. Consider a function $f_{G}=(V_{\emptyset},V_{\{1\}},V_{\{2\}},V_{\{1,2\}})$ of $G$ such that $V_{\emptyset}$ is an independent set. We next define a function $f:V(G\odot H)\rightarrow \mathbb{P}(\{1,2\})$ as follows. Suppose that $v_{i}\in V(G)=\{v_{1},\cdots,v_{n}\}$.

($a$) If $f_{G}(v_{i})=\emptyset$, we consider a function $f_{H}$ which assigns $\{1\}$ and $\{2\}$ to the vertices of $H_{i}$ for which both $\{1\}$ and $\{2\}$ appear at least one time to the vertices of $H_{i}$, and let $f(w)=f_{H}(w)$ for all $w\in V(H_{i})$.

($b$) Let $f_{G}(v_{i})=\{1\}$ (resp. $f_{G}(v_{i})=\{2\}$). Suppose that $I$ is an $\alpha(H)$-set and $I_{H}$ is the set of isolated vertices of $H$. Clearly, $I_{H}\subseteq I$. We define $g_{H}$ by $g_{H}(w)=\emptyset$ for each $w\in I\setminus I_{H}$, and $g_{H}(w)=\{2\}$ (resp. $g_{H}(w)=\{1\}$) for the other vertices $w$ of $H$. We next let $f(w)=g_{H}(w)$ for each $w\in V(H_{i})$.

($c$) Let $f_{G}(v_{i})=\{1,2\}$. Define $k_{H}$ by $k_{H}(w)=\emptyset$ for each $w\in I$, and $k_{H}(w)=\{1\}$ for the other vertices. We next let $f(w)=k_{H}(w)$ for each $w\in V(H_{i})$.

($d$) For each $1\leq i\leq n$, let $f(v_{i})=f_{G}(v_{i})$.

It is not hard to see that the above mentioned function $f$ is an OI$2$RD function of $G\odot H$ with weight
\begin{equation*}
\begin{array}{lcl} 
\omega(f)=|V_{\emptyset}||V(H)|&+&|V_{\{1\}}|(|V(H)|-\alpha(H)+i_{H}+1)\\
&+&|V_{\{2\}}|(|V(H)|-\alpha(H)+i_{H}+1)+|V_{\{1,2\}}|(|V(H)|-\alpha(H)+2). 
\end{array}
\end{equation*}
Since $f_{G}=(V_{\emptyset},V_{\{1\}},V_{\{2\}},V_{\{1,2\}})$ is an arbitrary function of $G$ for which $V_{\emptyset}$ is independent and because $\alpha(H)+\beta(H)=|V(H)|$, we deduce that
\begin{align*}
  \gamma_{oir2}(G\odot H) & \leq min\{|V_{\emptyset}||V(H)|+|V_{\{1\}}|(\beta(H)+i_{H}+1)\\
   &\hspace*{0.0cm}+|V_{\{2\}}|(\beta(H)+i_{H}+1)+|V_{\{1,2\}}|(\beta(H)+2)\},
\end{align*}
taken over all possible function $f_{G}=(V_{\emptyset},V_{\{1\}},V_{\{2\}},V_{\{1,2\}})$ of $G$ for which $V_{\emptyset}$ is an independent set in $G$.

On the other hand, let $g=(V'_{\emptyset},V'_{\{1\}},V'_{\{2\}},V'_{\{1,2\}})$ be a $\gamma_{oir2}(G\odot H)$-function and let $v_i\in V(G)$. We consider the following cases.

\textit{Case 1.} $g(v_{i})=\emptyset$. Since $V'_{\emptyset}$ is independent, we have $|g(w)|\geq1$ for all $w\in V(H_{i})$. Therefore, $g(V(H_{i})\cup\{v_{i}\})\geq|V(H)|$.

\textit{Case 2.} $g(v_{i})=\{1\}$ or $\{2\}$. Since $V'_{\emptyset}$ is independent, at most $\alpha(H)$ vertices of $H_{i}$ can be assigned $\emptyset$ under $g$. Moreover, the isolated vertices of $H_{i}\cong H$ cannot be assigned $\emptyset$ under $g$. Therefore, $|g(w)|\geq1$ for all $w\in I\setminus I_{H}$. This implies that $g(V(H_{i})\cup\{v_{i}\})\geq|V(H)|-\alpha(H)+i_{H}+1=\beta(H)+i_{H}+1$.

\textit{Case 3.} $g(v_{i})=\{1,2\}$. Note that at most $\alpha(H)=|V(H)|-\beta(H)$ vertices of $H_{i}$ can be assigned $\emptyset$ under $g$ by a similar fashion. Therefore, $g(V(H_{i})\cup\{v_{i}\})\geq \beta(H)+2$.

On the other hand, since $g$ is an OI$2$RD function of $G\odot H$, it follows that the function $f''_{G}=(V''_{\emptyset},V''_{\{1\}},V''_{\{2\}},V''_{\{1,2\}})=(V'_{\emptyset}\cap V(G),V'_{\{1\}}\cap V(G),V'_{\{2\}}\cap V(G),V'_{\{1,2\}}\cap V(G))$ fulfills the independence of $V''_{\emptyset}=V'_{\emptyset}\cap V(G)$. As a consequence of all the cases above, we deduce that
\begin{align*}
\gamma_{oir2}(G\odot H) & =\sum_{i=1}^n g(V(H_i)\cup\{v_i\})\geq |V''_{\emptyset}||V(H)|+|V''_{\{1\}}|(\beta(H)+i_{H}+1)\\
   &\hspace*{0.0cm}+|V''_{\{2\}}|(\beta(G)+i_{H}+1)+|V''_{\{1,2\}}|(\beta(H)+2)\\
   &\hspace*{0.0cm}\geq min\{|V_{\emptyset}||V(H)|+|V_{\{1\}}|(\beta(H)+i_{H}+1)\\
   &\hspace*{0.0cm}+|V_{\{2\}}|(\beta(H)+i_{H}+1)+|V_{\{1,2\}}|(\beta(H)+2)\},
\end{align*}
taken over all possible functions $f_{G}=(V_{\emptyset},V_{\{1\}},V_{\{2\}},V_{\{1,2\}})$ of $G$ for which $V_{\emptyset}$ is independent in $G$. Therefore,
\begin{align*}
  \gamma_{oir2}(G\odot H) & =min\{|V_{\emptyset}||V(H)|+|V_{\{1\}}|(\beta(H)+i_{H}+1)\\
   &\hspace*{0.0cm}+|V_{\{2\}}|(\beta(H)+i_{H}+1)+|V_{\{1,2\}}|(\beta(H)+2)\},
\end{align*}
taken over all possible above-mentioned functions $f_{G}$.

Clearly, $\beta(H)+1\leq min\{\beta(H)+2,|V(H)|\}$ when $H$ has no isolated vertices, and $\beta(H)+2\leq min\{\beta(H)+i_{H}+1,|V(H)|\}$ otherwise. Taking these facts into consideration, the function $f_{G}$ for which we get the minimum in the right-hand side of the last equality assigns $\{1\}$ or $\{2\}$ to all vertices of $G$ when $H$ has no isolated vertices, and assigns $\{1,2\}$ to all vertices of $G$ otherwise. Consequently, $\gamma_{oir2}(G\odot H)=n(\beta(H)+1)$ when $i_{H}=0$, and $\gamma_{oir2}(G\odot H)=n(\beta(H)+2)$ when $i_{H}\neq0$. This completes the proof.
\end{proof}


\end{document}